\newtheorem{theorem}{Theorem}[section]
\newtheorem{lemma}[theorem]{Lemma}
\newtheorem{corollary}[theorem]{Corollary}
\theoremstyle{remark}
	\newtheorem{definition}[theorem]{Definition}
	\newtheorem{remark}[theorem]{Remark}
\newcommand{\calH}{\mathcal{H}}
\newcommand{\calM}{\mathcal{M}}
\newcommand{\frakm}{\mathfrak{m}}
\newcommand{\frakn}{\mathfrak{n}}
\newcommand{\frako}{\mathfrak{o}}
\newcommand{\FF}{\mathbb{F}}
\newcommand{\QQ}{\mathbb{Q}}
\newcommand{\RR}{\mathbb{R}}
\newcommand{\ZZ}{\mathbb{Z}}
\DeclareMathOperator{\Frac}{Frac}
\DeclareMathOperator{\Spa}{Spa}
\DeclareMathOperator{\Spd}{Spd}
\DeclareMathOperator{\Ker}{Ker}
\begin{document}

\title{On the relative Nullstellensatz in nonarchimedean geometry}
\author{Kiran S. Kedlaya and Yutaro Mikami}


\begin{abstract}
We establish a relative version of the Nullstellensatz for algebras topologically of finite type over a given Banach Tate ring $A$, under the assumption that the corresponding statement holds for rational localizations of $A$. This applies in particular to pseudoaffinoid algebras and to the coordinate rings of affinoid subspaces of a Fargues--Fontaine curve.
\end{abstract}

\maketitle

The development of rigid analytic geometry over a nonarchimedean field $K$, as found for instance in \cite{bgr}, involves careful proof of some analogues of classical statements about rings of finite type over a field, including the Noether normalization theorem and the Nullstellensatz. We investigate the extent to which such statements can be made in a relative form when the field $K$ is replaced by a more general topological base ring.

We state our results in terms of commutative Banach rings as some of our proofs proceed most naturally in this language; we will only consider such rings that are \emph{Tate} in the sense that they contain a topologically nilpotent unit (e.g., Banach algebras over a nonarchimedean field). 
That said, any Huber ring which is Tate is the underlying topological ring of some commutative Banach Tate ring \cite[Remark~1.5.4]{aws}, so our results apply in that setting also.

Our main result (Theorem~\ref{T:Nullstellensatz property}) says that if $A$ is a commutative Banach Tate ring and the analogue of the Nullstellensatz holds for rational localizations of $A$, then it also holds for every algebra topologically of finite type over $A$. The conditional statement is needed to rule out certain pathologies (Remark~\ref{R:Banach field}), but the hypothesis is known to hold in certain situations such as for classical affinoid algebras over a nonarchimedean field, and for \emph{pseudoaffinoid} algebras over a discretely valued nonarchimedean field by the work of Louren\c{c}o  \cite{Lou17}. One new case is where $A$ is the coordinate ring of an affinoid subspace of a Fargues--Fontaine curve (Theorem~\ref{T:FF strong Null}).

In the following arguments, let $\calM(A)$ denote the Gelfand spectrum of a commutative Banach Tate ring $A$;
this is empty if and only if $A$ is the zero ring \cite[Theorem~1.2.1]{berkovich}. For $\alpha \in \calM(A)$, let $\calH(\alpha)$ denoted the completed residue field of $A$ at $\alpha$.
Let $A^{\circ}$ denote the subring of $A$ consisting of power-bounded elements, and let $A^{\circ\circ}$ denote the ideal of $A^{\circ}$ consisting of topologically nilpotent elements.

\section{Relative Weierstrass preparation and Noether normalization}

\begin{lemma}[Relative Weierstrass division]\label{L:division}
Let $A$ be a commutative Banach Tate ring.
Let $n_0\geq 0$ be an integer.
Let $f(T)=\sum_{n=0}^{\infty}f_nT^n \in A\langle T \rangle$ be a series such that:
\begin{enumerate}
\item $f_{n_0}=1$.
\item $|f_n|\leq 1$ for all $n<n_0$.
\item $|f_n|< 1$ for all $n>n_0$.
\end{enumerate}
Then for any $g\in A\langle T \rangle$, there exists a unique pair $(q,r)$ with $q\in A\langle T \rangle$, $r\in A[T]$, and $\deg r <n_0$ such that $g=fq+r$.
Moreover, in this case we have $|g|=\max \{|q|,|r|\}$, where we endow $A\langle T \rangle$ with the Gauss norm.
\end{lemma}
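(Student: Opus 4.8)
The plan is to split $f$ into a monic polynomial part and a ``small'' tail, dispose of the monic polynomial case by a direct ultrametric computation, and then bootstrap to the general statement via the Banach fixed-point theorem applied to $A\langle T\rangle$. Write $f = u + v$ with $u = \sum_{n=0}^{n_0} f_n T^n$ and $v = \sum_{n > n_0} f_n T^n$. Since $f \in A\langle T\rangle$ we have $f_n \to 0$, so only finitely many $|f_n|$ with $n > n_0$ exceed any given positive bound; as each such $|f_n|$ is $< 1$ by hypothesis (3), this forces $c := |v| < 1$. Hypotheses (1)--(2) give $|u| = 1$, and hence $|f| = 1$.

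First I would treat division by the monic polynomial $u$. From $T^{n_0} \equiv -(f_0 + \cdots + f_{n_0 - 1}T^{n_0-1}) \pmod u$ and induction, every power $T^m$ reduces modulo $u$ to a polynomial of degree $< n_0$ and Gauss norm $\le 1$; approximating $g$ by its polynomial truncations $g^{[k]} = \sum_{n<k} g_n T^n$, performing ordinary polynomial division $g^{[k]} = u q^{(k)} + r^{(k)}$ (legitimate since $u$ is monic), and checking that $q^{(k)}$ and $r^{(k)}$ are Cauchy, one obtains a decomposition $g = uq + r$ with $q \in A\langle T\rangle$ and $\deg r < n_0$. The crucial input is the norm identity for this division: given $q \in A\langle T\rangle$, let $N$ be the largest index with $|q_N| = |q|$ (it exists because $|q_n| \to 0$); then the coefficient of $T^{N + n_0}$ in $uq$ is $\sum_{i=0}^{n_0} f_i q_{N+n_0-i}$, in which the term $i = n_0$ equals $q_N$ while every other term has strictly smaller norm (for $i < n_0$ the index $N+n_0-i$ exceeds $N$, so $|q_{N+n_0-i}| < |q|$). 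Hence $|uq| = |q|$, and since $\deg r < n_0 \le N + n_0$ this yields $|uq + r| = \max\{|q|, |r|\}$; in particular the decomposition $(q,r)$ is unique, and the same argument applies to the polynomials $g^{[k]}$, securing the Cauchy estimates above.

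Next, define $\Phi\colon A\langle T\rangle \to A\langle T\rangle$ by letting $\Phi(h)$ be the quotient in the monic division $g - vh = u\,\Phi(h) + (\text{remainder})$. Subtracting the divisions attached to $h_1$ and $h_2$ exhibits $v(h_2 - h_1)$ as $u\bigl(\Phi(h_1) - \Phi(h_2)\bigr)$ plus a polynomial of degree $< n_0$, so the norm identity above gives $|\Phi(h_1) - \Phi(h_2)| \le |v(h_2 - h_1)| \le c\,|h_1 - h_2|$. Thus $\Phi$ is a contraction on the complete space $A\langle T\rangle$ and has a unique fixed point $q$; with $r$ the associated remainder we get $g - vq = uq + r$, i.e.\ $g = fq + r$, which proves existence. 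For uniqueness, if $g = fq' + r'$ with $\deg r' < n_0$, then $g - vq' = uq' + r'$ is precisely the monic division of $g - vq'$, so $q'$ is a fixed point of $\Phi$ and hence $q' = q$, forcing $r' = r$.

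Finally, for $|g| = \max\{|q|, |r|\}$: the inequality $\le$ is immediate from $|fq + r| \le \max\{|f||q|, |r|\}$ and $|f| = 1$. For $\ge$, write $g = uq + vq + r$; since $vq$ is supported in degrees $> n_0$, rerunning the coefficient-of-$T^{N+n_0}$ computation (contributions from $v$ carry a factor $|f_i| < 1$ and so are dominated by $q_N$) gives $|g| \ge |q|$, and comparing the coefficients of $g$ and $uq$ in degrees $< n_0$, where $vq$ does not contribute, gives $|r| \le \max\{|g|, |uq|\} = \max\{|g|, |q|\} = |g|$. I expect the main obstacle to be the monic polynomial case: verifying that the quotient genuinely lies in $A\langle T\rangle$ and carrying out the ultrametric bookkeeping behind $|uq| = |q|$, since this is exactly the point where hypothesis (3) and the convergence $f_n \to 0$ are used and where no soft fixed-point argument is available.
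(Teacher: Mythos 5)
Your proof is correct and follows essentially the same route as the paper. The paper establishes the norm identity $|fq+r| = \max\{|q|,|r|\}$ directly for the full $f$ by the same coefficient-of-$T^{n_0+m}$ analysis you use, and then defers existence/uniqueness to \cite[Theorem~5.2.1/2]{bgr}, whose argument (split $f$ into the monic part $u$ and the small tail $v$, handle $u$ directly, and bootstrap by a contraction-mapping iteration) is precisely what you have written out in detail; your only reorganization is to run the norm estimate first for $u$ alone and recover the equality for $f$ afterward.
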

\begin{proof}
Let us prove $|fq+r|=\max \{|q|,|r|\}$ for $q=\sum_{n=0}^{\infty}q_nT^n\in A\langle T \rangle$ and $r\in A[T]$ with $\deg r <n_0$.
Let $m$ be the largest integer such that $|q_m|=|q|$.
Then, the norm of the coefficient of $T^{n_0+m}$ in $fq$ is equal to $|q_m|$.
Therefore, we get $|fq|\geq |q|$. 
Since the Gauss norm is submultiplicative, we get $|fq|= |q|$.
In particular, we have $|fq+r|\leq \max \{|q|,|r|\}$, and if $|q|<|r|$, then the equality holds.
If $|q|\geq |r|$, then the norm of the coefficient of $T^{n_0+m}$ in $fq+r$ (or equivalently, in $fq$) is equal to $|q_m|$.
Thus, we get $|fq+r|\geq |q| \geq \max \{|q|,|r|\}$, which implies $|fq+r|= \max \{|q|,|r|\}$.

The remaining claims can be proved in the same way as in \cite[Theorem~5.2.1/2]{bgr}.
\end{proof}

\begin{lemma}[Relative Weierstrass preparation] \label{L:Weierstrass prep}
Let $A$ be a commutative Banach Tate ring.
Let $f = \sum_{n=0}^\infty f_n T^n \in A \langle T \rangle$ be a series such that for some nonnegative integer $n_0$:
\begin{itemize}
\item
$f_{n_0}$ is a unit in $A$;
\item
$|f_{n_0}^{-1} f_n| \leq 1$ for all $n < n_0$;
\item
$|f_{n_0}^{-1} f_n| < 1$ for all $n > n_0$.
\end{itemize}
Then $f$ factors uniquely as $gu$ where $g \in A[T]$ is a monic polynomial of degree $n_0$ and $u \in A \langle T \rangle^\times$ is a unit.
\end{lemma}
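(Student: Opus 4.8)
The plan is to follow the proof of the classical Weierstrass preparation theorem \cite[Theorem~5.2.2/1]{bgr}, using Lemma~\ref{L:division} in place of Weierstrass division over a field.

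First I would reduce to the case $f_{n_0} = 1$ by replacing $f$ with $f_{n_0}^{-1} f$: this again lies in $A \langle T \rangle$ since $|f_{n_0}^{-1} f_n| \leq |f_{n_0}^{-1}|\,|f_n| \to 0$, and since $f_{n_0}^{-1} \in A^{\times}$, a factorization of the desired shape exists, resp.\ is unique, for $f$ if and only if it does for $f_{n_0}^{-1} f$. After this reduction $f$ satisfies the three numbered hypotheses of Lemma~\ref{L:division} and has Gauss norm $1$.

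For existence, divide $T^{n_0}$ by $f$ using Lemma~\ref{L:division}: write $T^{n_0} = fq + r$ with $q \in A \langle T \rangle$, $r \in A[T]$, $\deg r < n_0$, and set $g := T^{n_0} - r = fq$, a monic polynomial of degree $n_0$. The norm identity of Lemma~\ref{L:division} gives $1 = |T^{n_0}| = \max\{|q|, |r|\}$, so $|r| \leq 1$, whence $g$ itself satisfies the hypotheses of Lemma~\ref{L:division}. To see that $q$ is a unit, divide $f$ by $g$: write $f = g q' + r'$ with $\deg r' < n_0$; substituting $g = fq$ yields $f(1 - qq') = r'$, so both $(1 - qq', 0)$ and $(0, r')$ serve as quotient--remainder pairs for dividing $r'$ by $f$, and the uniqueness clause of Lemma~\ref{L:division} forces $qq' = 1$ and $r' = 0$. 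Thus $q \in A \langle T \rangle^{\times}$ and $f = g q^{-1}$, giving the factorization with $u = q^{-1}$.

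For uniqueness, let $f = g_1 u_1$ be any factorization of the asserted form and let $g$, $q$ be as constructed above, so that $f = g q^{-1}$. Then $g_1 = g v$ with $v := q^{-1} u_1^{-1} \in A \langle T \rangle^{\times}$, and since $g_1$ and $g$ are both monic of degree $n_0$, the difference $g_1 - g$ lies in $A[T]$ with $\deg(g_1 - g) < n_0$. Dividing $g_1 - g$ by $g$ via Lemma~\ref{L:division} (legitimate because $g$ satisfies its hypotheses), the pairs $(v - 1, 0)$, coming from $g_1 - g = g(v-1)$, and $(0, g_1 - g)$ are both valid, so uniqueness yields $g_1 = g$ and $v = 1$, hence $u_1 = q^{-1}$. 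I expect the only delicate point to be exactly this last step: the hypotheses constrain the coefficients $f_{n_0}^{-1} f_n$ but not those of an arbitrary monic factor $g_1$, so $g_1$ need not satisfy the hypotheses of Lemma~\ref{L:division}; the remedy is to compare $g_1$ against the distinguished polynomial $g$ from the existence argument and invoke uniqueness of Weierstrass division with $g$, rather than $g_1$, as the divisor. The degenerate case $n_0 = 0$ — where $r = r' = 0$, $g = 1$, and $f$ is simply a unit — is covered by the same computations.
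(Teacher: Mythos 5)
Your proof is correct and is precisely the argument that the paper's one-line proof defers to (normalize so $f_{n_0}=1$, then run the argument of \cite[Theorem~5.2.2/1]{bgr} with Lemma~\ref{L:division} in place of classical Weierstrass division): divide $T^{n_0}$ by $f$ to manufacture the monic factor $g = T^{n_0}-r$, use the uniqueness clause of the division lemma to see that the quotient $q$ is invertible, and again use uniqueness of division by $g$ to pin down the factorization. One small remark: the third bullet in the statement of Lemma~\ref{L:Weierstrass prep} should read $|f_{n_0}^{-1}f_n| < 1$ for $n > n_0$ (a typo, as the stated inequality is incompatible with $f \in A\langle T\rangle$), which you have implicitly and correctly assumed when asserting that the normalized $f$ meets the hypotheses of Lemma~\ref{L:division}.
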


\begin{proof}
We may assume $f_{n_0}=1$ by multiplying $f$ by $f_{n_0}^{-1}$.
By using Lemma~\ref{L:division}, we can prove the claim in the same way as in \cite[Theorem~5.2.2/1]{bgr}.
\end{proof}

\begin{lemma} \label{L:perturb finite presentation}
Let $A$ be a commutative Banach Tate ring. 
Let $A \langle X_1,\dots,X_m \rangle \to B$ be a finite morphism, and we write $x_i$ for the image of $X_i$ in $B$.
Then for any $t_1,\dots,t_m \in B$ with $|t_i-x_i|$ sufficiently small, 
the induced morphism $A[T_1,\dots,T_m] \to B;\; T_i\mapsto t_i$ extends to a finite morphism
$A \langle T_1,\dots,T_m \rangle  \to B$.
\end{lemma}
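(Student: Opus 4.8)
The plan is to adapt the perturbation technique underlying Noether normalization for affinoid algebras (cf.\ \cite{bgr}) to the relative Banach setting. Write $R=A\langle X_1,\dots,X_m\rangle$ and let $\phi\colon R\to B$ be the given finite morphism. Fix a finite set of generators $b_1,\dots,b_s$ of $B$ as an $R$-module; by the open mapping theorem for Banach modules there is a constant $C$ such that every $b\in B$ can be written as $b=\sum_j\phi(p_j)b_j$ with $p_j\in R$ and $\max_j\|p_j\|_R\le C\|b\|_B$. Put $\delta_i=T_i-\phi(X_i)\in B$ and $\epsilon_0=\max_i\|\delta_i\|_B$.

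The first step is to produce the morphism out of $A\langle T_1,\dots,T_m\rangle$ at all. Since $\phi$ is bounded and $\|X_i^n\|_R=1$, each $\phi(X_i)$ is power-bounded in $B$; hence, provided $\epsilon_0$ is small enough (e.g.\ $\epsilon_0\le 1$), so is $T_i=\phi(X_i)+\delta_i$, and therefore the monomials $T^\alpha$ have uniformly bounded norm in $B$. This is exactly the condition for the $A$-algebra map $A[T_1,\dots,T_m]\to B$, $T_i\mapsto T_i$, to be bounded for the Gauss norm, so it extends to a continuous $A$-algebra morphism $\phi'\colon A\langle T_1,\dots,T_m\rangle\to B$.

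The heart of the proof is a comparison estimate between $\phi$ and $\phi'$. Let $\Phi\colon A\langle X_1,\dots,X_m\rangle\xrightarrow{\sim}A\langle T_1,\dots,T_m\rangle$ be the isometric isomorphism sending $X_i\mapsto T_i$. I claim $\|\phi(p)-\phi'(\Phi(p))\|_B\le\epsilon\,\|p\|_R$ for all $p\in R$, with $\epsilon\to 0$ as $\epsilon_0\to 0$. To see this, expand $\phi(X_i)^{\alpha_i}-\phi'(T_i)^{\alpha_i}$ as a telescoping sum in $\delta_i$ and use power-boundedness to bound a single monomial $\phi(X)^\alpha-\phi'(T)^\alpha$, then sum over coefficients, and finally pass from polynomials to power series using density of $A[X_1,\dots,X_m]$ in $A\langle X_1,\dots,X_m\rangle$ and continuity. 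This is the step I expect to require the most care: one must track the constants entering $\epsilon$ (coming from $\|\phi\|$, the multiplication table of $B$, and the number of variables), since the precise meaning of ``sufficiently small'' is exactly what these bounds dictate.

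Granting the estimate, I would finish by successive approximation. Given $b\in B$, write $b=\sum_j\phi(p_j^{(0)})b_j$ with $\max_j\|p_j^{(0)}\|\le C\|b\|$, and set $b^{(1)}=b-\sum_j\phi'(\Phi(p_j^{(0)}))b_j=\sum_j\bigl(\phi(p_j^{(0)})-\phi'(\Phi(p_j^{(0)}))\bigr)b_j$; the comparison estimate gives $\|b^{(1)}\|\le\theta\|b\|$ with $\theta<1$ once $\epsilon_0$ is small enough. Iterating and summing the resulting geometric series in $A\langle T_1,\dots,T_m\rangle$ produces $r_j=\sum_{k\ge 0}\Phi(p_j^{(k)})$ with $b=\sum_j\phi'(r_j)b_j$. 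Hence $B=\sum_j\phi'(A\langle T_1,\dots,T_m\rangle)\,b_j$ is module-finite over $A\langle T_1,\dots,T_m\rangle$ via $\phi'$, and $\phi'$ extends the given map $A[T_1,\dots,T_m]\to B$, as required.
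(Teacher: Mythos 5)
Your proposal is correct but takes a genuinely different route from the paper's. The paper passes to a ring of definition $A_0 \subset A$ with topologically nilpotent unit $\varpi$, constructs a finite $A_0\langle X_1,\dots,X_m\rangle$-subalgebra $R \subset B$ with $A\otimes_{A_0}R \cong B$, imposes $T_i - X_i \in \varpi R$, notes that $R/\varpi$ is finite over $(A_0/\varpi)[X_1,\dots,X_m] = (A_0/\varpi)[T_1,\dots,T_m]$, and concludes by $\varpi$-adic completeness of $R$. That argument is purely algebraic and avoids any explicit norm bookkeeping; the successive-approximation content is packaged into the completeness step. Your argument instead works directly with the Banach norms: you build $\phi'$ by hand, prove a uniform comparison estimate $\|\phi(p)-\phi'(\Phi(p))\|\le\epsilon\|p\|$, and run a geometric-series iteration relying on the open mapping theorem for Banach modules. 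This is closer to the flavor of the classical perturbation arguments in \cite{bgr}, at the cost of tracking constants. Two details you should make explicit to close the argument: (i) the open mapping theorem you invoke for the surjection $R^s \to B$ is not automatic over an arbitrary Banach ring and should be justified via the Tate hypothesis (a topologically nilpotent unit makes the Baire-category argument go through); (ii) when asserting that $T_i = \phi(X_i)+\delta_i$ is power-bounded with a controlled bound, you should argue via submultiplicativity of the Banach norm — $\|\delta_i\|\le 1$ gives $\|\delta_i^n\|\le 1$, and then $\|T_i^n\|\le\max_k\|\phi(X_i)^{n-k}\|\,\|\delta_i^k\|\le\|\phi\|$ — rather than tacitly relying on power-multiplicativity, which need not hold for the norm on $B$; this is also what makes the constant $M$ in your comparison estimate uniform.
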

\begin{proof}
Let $A_0$ be a ring of definition of $A$.
By choosing a finite generating set of $B$ over $A \langle X_1,\dots,X_m \rangle$
and then rescaling, we construct a finite $A_0 \langle X_1,\dots,X_m \rangle$-subalgebra $R$ of $B$
such that $A \otimes_{A_0} R \cong B$. If we then fix a topologically nilpotent unit $\varpi \in A_0$
and restrict $t_1,\dots,t_m$ by specifying that $t_i - x_i \in \varpi R$,
then $R/\varpi$ is finite over $(A_0/\varpi)[X_1,\dots,X_m] = (A_0/\varpi)[T_1,\dots,T_m]$.
Since $R$ is $\varpi$-adically complete, we deduce from this that $R$ is finite over $A_0 \langle T_1,\dots,T_m \rangle$, and hence $B$ is finite over $A \langle T_1,\dots,T_m \rangle$.
\end{proof}

\begin{lemma}\label{L:local finite}
Let $R$ be a commutative Banach Tate ring, fix $\alpha \in \calM(R)$, and set $K := \calH(\alpha)$.
Let $A$, $B$ be commutative Banach $R$-algebras topologically of finite type, and let $A\to B$ be a morphism of commutative Banach $R$-algebras.
If the induced map $A \widehat{\otimes}_R K \to B \widehat{\otimes}_R K$ is finite, then there exists a rational localization $R \to R'$ with $\alpha \in \calM(R')$ such that the induced map $A \widehat{\otimes}_R R' \to B \widehat{\otimes}_R R'$ is finite. 
\end{lemma}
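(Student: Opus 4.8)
The plan is to reduce the claim to a statement over a ring of definition that can be checked modulo a pseudo-uniformizer, so as to reuse the topological Nakayama argument behind Lemma~\ref{L:perturb finite presentation}. Since $A\to B$ is a morphism of Banach $R$-algebras topologically of finite type, $B$ is topologically of finite type over $A$; fix a presentation $B = A\langle Z_1,\dots,Z_k\rangle/J$, which induces $B\widehat\otimes_R R' = (A\widehat\otimes_R R')\langle Z_1,\dots,Z_k\rangle/J_{R'}$ for every rational localization $R\to R'$, compatibly with $B\widehat\otimes_R K$. The essential external ingredient is the identification of $K=\calH(\alpha)$ with the completed filtered colimit of the rational localizations $R\to R'$ with $\alpha\in\calM(R')$: these form a filtered system, $\Frac(R/\ker\alpha)$ (which is dense in $\calH(\alpha)$) lies in the image of $\varinjlim_{R'}R'$, and each such $R'$ satisfies $R'\widehat\otimes_R\calH(\alpha)\cong\calH(\alpha)$. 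Passing this identification through $\widehat\otimes_R(-)$ and through the formation of rings of definition yields two approximation facts: $(\ast)$ every power-bounded element of $A\widehat\otimes_R K$ agrees, modulo $\varpi^M$ for any prescribed $M$, with the image of some power-bounded element of $A\widehat\otimes_R R'$; and $(\ast\ast)$ if an element of $B\widehat\otimes_R R'$ has image of norm $\le|\varpi|^M$ in $B\widehat\otimes_R K$, then it already has norm $\le|\varpi|^{M-1}$ in $B\widehat\otimes_R R''$ for some $R''\supseteq R'$.

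Over $K$ the statement is classical: $A\widehat\otimes_R K$ is an affinoid algebra over the nonarchimedean field $K$, hence Noetherian, so finiteness of $A\widehat\otimes_R K\to B\widehat\otimes_R K$ forces each $\bar Z_l\in B\widehat\otimes_R K$ (the image of the presentation variable) to be integral over $A\widehat\otimes_R K$; since moreover each $\bar Z_l$ is power-bounded in $B\widehat\otimes_R K$, being the image of a power-bounded element, it is in fact integral over the power-bounded subring (a standard property of finite morphisms of affinoid algebras over a field): there is a monic $P_l(Z_l)=Z_l^{d_l}+\sum_{i<d_l}c_{l,i}Z_l^i$ with all $c_{l,i}\in(A\widehat\otimes_R K)^\circ$ and $P_l(\bar Z_l)=0$.

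Now I spread out. Using $(\ast)$, pass to a single $R'$ and choose power-bounded $c_{l,i}'\in A\widehat\otimes_R R'$ with $c_{l,i}'\equiv c_{l,i}\pmod{\varpi^M}$ ($M$ large, to be fixed), and set $P_l'(Z_l)=Z_l^{d_l}+\sum_{i<d_l}c_{l,i}'Z_l^i$. Then the image of $P_l'(\bar Z_l)$ in $B\widehat\otimes_R K$ equals $P_l(\bar Z_l)$ plus an element of norm $\le|\varpi|^M$, hence has norm $\le|\varpi|^M$; by $(\ast\ast)$, after enlarging to some $R''\supseteq R'$ we get $\|P_l'(\bar Z_l)\|_{B\widehat\otimes_R R''}\le|\varpi|$ for every $l$. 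Choose a ring of definition $A_0$ of $A\widehat\otimes_R R''$ containing the (finitely many) images of the $c_{l,i}'$, and let $B_0 = A_0\langle Z_1,\dots,Z_k\rangle/(\,\cdot\,)$ be the induced ring of definition of $B\widehat\otimes_R R''$, so that $B_0/\varpi B_0 = (A_0/\varpi A_0)[\bar Z_1,\dots,\bar Z_k]$ and (for a compatible choice of defining norm) $P_l'(\bar Z_l)\in\varpi B_0$. Reducing modulo $\varpi$, each $\bar Z_l$ is integral over $A_0/\varpi A_0$, so $B_0/\varpi B_0$ is finite over $A_0/\varpi A_0$; since $A_0$ and $B_0$ are $\varpi$-adically complete, the topological Nakayama argument of Lemma~\ref{L:perturb finite presentation} shows that $B_0$ is finite over $A_0$, and inverting $\varpi$ shows that $A\widehat\otimes_R R''\to B\widehat\otimes_R R''$ is finite. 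Taking $R':=R''$ finishes the proof.

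I expect the main obstacle to be the first ingredient: making precise that $\calH(\alpha)$, and the Banach algebras $A\widehat\otimes_R(-)$, $B\widehat\otimes_R(-)$ and their rings of definition built from it, are the completed filtered colimits of their analogues over the rational localizations containing $\alpha$ — together with the attendant bookkeeping (compatible choices of rings of definition, and the passage between ``small norm'' and ``divisible by $\varpi$ in a ring of definition'' uniformly along the colimit), which is where any nonarchimedean subtleties of the base ring $R$ have to be absorbed. Once these are in place, everything else is the formal manipulation above together with standard facts about affinoid algebras over a field.
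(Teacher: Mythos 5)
Your proposal takes essentially the same approach as the paper. The paper's proof likewise fixes a presentation of $B$ as a quotient of $A\langle X_1,\dots,X_n\rangle$ with $x_i\in B^\circ$, uses finiteness of $A\widehat\otimes_R K\to B\widehat\otimes_R K$ to produce monic polynomials $f_i\in(A\widehat\otimes_R K)^\circ[T]$ annihilating the generators, approximates them by monic $g_i\in A^\circ[T]$ after shrinking $R$ (your approximation fact $(\ast)$), then cites \cite[Lemma~1.4.8]{Hub96} to spread out the topological nilpotence of $g_i(x_i)$ to a rational neighborhood of $\alpha$ (your $(\ast\ast)$), and finally cites \cite[Lemma~1.4.3]{Hub96} to conclude finiteness --- which is exactly the ring-of-definition / topological Nakayama argument you write out. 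In other words, the two ``external ingredients'' you flag as the main obstacle are precisely the two Huber lemmas the paper invokes, and the rest of your outline matches the paper's proof step for step; the only real difference is that you reprove Huber's lemmas inline rather than citing them.

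One detail in your final paragraph should be tightened. The approximation fact $(\ast\ast)$, if it is to be proved by the compactness/properness argument implicit in your sketch, is really a statement about the spectral seminorm (equivalently, about topological nilpotence after rescaling by a power of $\varpi$), not about a Banach norm. From it you get that $P_l'(\bar Z_l)$ is topologically nilpotent in $B\widehat\otimes_R R''$, but that does not directly yield $P_l'(\bar Z_l)\in\varpi B_0$ for your chosen ring of definition $B_0$, since the spectral seminorm can be strictly smaller than the quotient norm defining $B_0$. What topological nilpotence does give is $(P_l'(\bar Z_l))^N\in\varpi B_0$ for some $N$; since $(P_l')^N$ is again monic with coefficients that can be arranged to lie in $A_0$, the reduction modulo $\varpi$ still exhibits $\bar Z_l$ as integral over $A_0/\varpi A_0$, and the rest of your argument goes through unchanged.
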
 
\begin{proof}
Since the morphism $R \to B$ is topologically of finite type, so is $A \to B$;
we can thus choose $x_1,\ldots,x_n\in B^{\circ}$ such that the induced map $A \langle X_1,\dots,X_n \rangle \to B ;\; X_i\mapsto x_i$ is a quotient map.
Since $A \widehat{\otimes}_R K \to B \widehat{\otimes}_R K$  is finite and the image of $x_i$ in $B \widehat{\otimes}_R K$  lies in $(B \widehat{\otimes}_R K)^{\circ}$, we can find a monic polynomial $f_i(T)\in (A \widehat{\otimes}_R K)^{\circ}[T]$ such that $f_i(x_i)=0$ in $B \widehat{\otimes}_R K$.
Since the image of $\varinjlim_{R\to R'} A\widehat{\otimes}R'\to A\widehat{\otimes}K$ is dense in  $A\widehat{\otimes}K$, where the colimit is taken over all rational localizations $R\to R'$ such that $\alpha\in \calM(R')$, 
we may replace $R$ with a rational localization $R \to R'$ satisfying $\alpha \in \mathcal{M}(R')$ (and replace $A$ and $B$ with $A \widehat{\otimes}_R R'$ and $B \widehat{\otimes}_R R'$, respectively) 
so that there exists a monic polynomial $g_i(T) \in A^{\circ}[T]$ with 
$f_i(T) - g_i(T) \in (A \widehat{\otimes}_R K)^{\circ\circ}[T].$
Then $g_i(x_i)$ is topologically nilpotent in $B \widehat{\otimes}_R K$.
By \cite[Lemma~1.4.8]{Hub96}, there exists a rational localization $R\to R'$ with $\alpha \in \calM(R')$ such that $g_i(x_i)$ is topologically nilpotent in $B \widehat{\otimes}_R R'$.
Then, the induced map $A \widehat{\otimes}_R K \to B \widehat{\otimes}_R K$ is finite by \cite[Lemma~1.4.3]{Hub96}.
\end{proof}

\begin{theorem}[Relative Noether normalization] \label{L:noether normalization}
Let $A$ be a commutative Banach Tate ring, fix $\alpha \in \calM(A)$, and set $K := \calH(\alpha)$.
Let $B$ be a commutative Banach $A$-algebra topologically of finite type.
Let $m$ be the dimension of the affinoid algebra $B \widehat{\otimes}_A K$ over $K$.
Then for some rational localization $A \to A'$ with $\alpha \in \calM(A')$,
there exists a finite $A'$-linear morphism $A' \langle T_1,\dots,T_m \rangle \to B \widehat{\otimes}_A A'$ such that the induced map $K \langle T_1,\dots,T_m \rangle \to B \widehat{\otimes}_A K$ is injective.
\end{theorem}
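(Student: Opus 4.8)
The plan is to reduce the statement to the classical Noether normalization theorem for affinoid algebras over the nonarchimedean field $K$, together with Lemma~\ref{L:local finite}; the one point that needs care is to arrange that the normalization map over $K$ is the base change of a morphism already defined over $A$, so that Lemma~\ref{L:local finite} can be applied.

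First, since $B$ is topologically of finite type over $A$, fix a presentation $A\langle X_1,\dots,X_n \rangle \to B$ and let $x_1,\dots,x_n \in B^{\circ}$ be the images of $X_1,\dots,X_n$; write $\bar x_i$ for the image of $x_i$ in $B_K := B \widehat{\otimes}_A K$. The elements $\bar x_i$ topologically generate the affinoid algebra $B_K$ over $K$. Now apply Noether normalization over $K$ (see \cite[\S6.1]{bgr}). The crucial observation is that in the Weierstrass-type proof, the normalization is built from finitely many coordinate changes of the shape $X_i \mapsto X_i + X_j^{s}$ with $s$ a positive integer; consequently one obtains a finite, injective $K$-algebra map $\phi \colon K\langle T_1,\dots,T_m \rangle \to B_K$ for which $\phi(T_i) = p_i(\bar x_1,\dots,\bar x_n)$ with $p_i \in \ZZ[X_1,\dots,X_n]$. (Here injectivity is automatic: $\phi$ is finite, $K\langle T_1,\dots,T_m \rangle$ is a domain, and $\dim B_K = m$.)

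Since $B^{\circ}$ is a subring of $B$ containing the image of $\ZZ$, the elements $t_i := p_i(x_1,\dots,x_n)$ lie in $B^{\circ}$, so the assignment $T_i \mapsto t_i$ defines a morphism of commutative Banach $A$-algebras $\psi \colon A\langle T_1,\dots,T_m \rangle \to B$. By construction $\psi \widehat{\otimes}_A K = \phi$, which is finite. It remains to apply Lemma~\ref{L:local finite} with $R = A$ to the morphism $\psi$ (note that $A\langle T_1,\dots,T_m \rangle$ and $B$ are commutative Banach $A$-algebras topologically of finite type): since $\psi \widehat{\otimes}_A K$ is finite, there is a rational localization $A \to A'$ with $\alpha \in \calM(A')$ such that $\psi \widehat{\otimes}_A A' \colon A'\langle T_1,\dots,T_m \rangle \to B \widehat{\otimes}_A A'$ is finite, using the identification $A\langle T_1,\dots,T_m \rangle \widehat{\otimes}_A A' \cong A'\langle T_1,\dots,T_m \rangle$. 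Base changing this morphism along $A' \to K$ recovers $\psi \widehat{\otimes}_A K = \phi$, which is injective; thus $A'$ has the required properties.

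The step that carries the real weight is the first one: one must extract from the proof of Noether normalization over $K$ a normalization that is defined by polynomials with integer coefficients in a fixed set of topological generators of $B$, so that it lifts over $A$ and Lemma~\ref{L:local finite} becomes applicable. If one prefers to use instead the $K$-linear form of Noether normalization, one can first pass to a Laurent localization of $A$ over which suitable approximations of the linear coefficients become power-bounded, and then invoke Lemma~\ref{L:perturb finite presentation} to preserve finiteness under the approximation; either way, Lemma~\ref{L:local finite} is what converts ``finite after $\widehat{\otimes}_A K$'' into ``finite after a rational localization.''
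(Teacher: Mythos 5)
Your proof is correct, and it reduces the theorem to Lemma~\ref{L:local finite} exactly as the paper does; the difference is in how you produce a candidate $A$-linear map $A\langle T_1,\dots,T_m\rangle \to B$ whose $K$-base change is finite and injective. The paper obtains the Noether normalization coordinates $T_1,\dots,T_m \in B\widehat{\otimes}_A K$ from \cite[Corollary~6.1.2/2]{bgr}, approximates them by nearby elements of $B\widehat{\otimes}_A A'$ for a suitable rational localization $A'$, and then invokes Lemma~\ref{L:perturb finite presentation} (over the base $K$) to keep finiteness under the perturbation. You instead inspect the BGR proof and observe that the normalization coordinates can be chosen to be $\ZZ$-coefficient polynomials (via coordinate changes $X_i \mapsto X_i + X_j^s$) in a fixed power-bounded system of topological generators of $B$, so they lift \emph{exactly} to power-bounded elements $t_i \in B^{\circ}$ via the same polynomials, making the perturbation step unnecessary. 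This avoids Lemma~\ref{L:perturb finite presentation} and the density-of-the-stalk argument, at the cost of depending on the precise shape of the automorphisms used in the BGR argument; you already flag the approximation route as an alternative at the end, which is essentially the paper's method. Both arguments are valid, and the payoff of yours is a slightly more self-contained reduction once one has granted the integer-coefficient form of the coordinate changes.
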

\begin{proof}
By Lemma~\ref{L:local finite}, it is enough to show that for some choice of $A'$, there exist $t_1,\dots,t_m \in B \widehat{\otimes}_A A'$ for which we obtain an induced map
$A' \langle T_1,\dots,T_m \rangle \to B \widehat{\otimes}_A A';\; T_i\mapsto t_i$ such that the further induced map $K \langle T_1,\dots,T_m \rangle \to B \widehat{\otimes}_A K$ is finite and injective.
We apply the Noether normalization theorem for affinoid algebras given in
\cite[Corollary~6.1.2/2]{bgr} to get $t_1,\dots,t_m \in B \widehat{\otimes}_A K$
giving rise to a finite map $K \langle T_1,\dots,T_m \rangle \to B \widehat{\otimes}_A K;\; T_i\mapsto t_i$.
We then use Lemma~\ref{L:perturb finite presentation} to replace the $t_i$ with sufficiently close elements $t_i \in B \widehat{\otimes}_A A'$ for some $A'$, which gives what we want. (The resulting map must be injective because $B \widehat{\otimes}_A K$ is of dimension $m$ over $K$; keep in mind that this relies on properties of dimension that themselves hinge on \cite[Corollary~6.1.2/2]{bgr}.)
\end{proof}

\section{Relative Nullstellensatz}

The following is a variant of \cite[Proposition~2.14]{banach-field}.

\begin{lemma} \label{lem:dense field}
Let $R$ be a one-dimensional connected affinoid algebra over a nonarchimedean field $K$.
Then there exists an open subset $U$ of $R$ not containing $0$, none of whose elements is invertible.
Consequently, no dense subring of $R$ can be a field.
\end{lemma}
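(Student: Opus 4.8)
The plan is to reduce, via Noether normalization and a relative norm map, to the one-variable Tate algebra $K\langle T\rangle$, for which the non-units are easily seen to have nonempty interior; I would not use connectedness of $R$, only $\dim R=1$. First I would reduce to the case where $R$ is a domain. Since $\dim R=1$, choose a minimal prime $\mathfrak p$ of $R$ with $\dim(R/\mathfrak p)=1$, put $R':=R/\mathfrak p$ (an integral affinoid $K$-algebra of dimension $1$), and let $\pi\colon R\to R'$ be the quotient map, a bounded surjection. If $U'\subseteq R'$ is open, nonempty, does not contain $0$, and contains no unit, then $U:=\pi^{-1}(U')$ has the same four properties in $R$: it is open since $\pi$ is continuous; nonempty and not containing $0$ since $\pi$ is surjective with $\pi(0)=0\notin U'$; and free of units since a unit of $R$ would map to a unit of $R'$. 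So it suffices to construct $U'$ inside the domain $R'$.

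Next I would build a relative norm on $R'$. By Noether normalization for affinoid algebras (\cite[Corollary~6.1.2/2]{bgr}) there is a finite injection $K\langle T\rangle\hookrightarrow R'$; since $R'$ is a domain it is torsion-free over $K\langle T\rangle$, and since $K\langle T\rangle$ is a principal ideal domain (which follows from Weierstrass preparation, cf.\ \cite{bgr}), $R'$ is $K\langle T\rangle$-free of some rank $d\geq 1$. Fixing a basis, let $M_g\in M_d(K\langle T\rangle)$ be the matrix of multiplication by $g\in R'$ and set $N(g):=\det M_g\in K\langle T\rangle$. The entries of $M_g$ are $K\langle T\rangle$-linear in the coordinates of $g$ (the structure constants of the basis), and the affinoid topology on $R'$ coincides with its topology as a finite $K\langle T\rangle$-module (see \cite{bgr}), so $g\mapsto M_g$, and hence $N$, is continuous; moreover $N(0)=0$ and $N(T)=\det(T I_d)=T^d$, writing $T$ also for its image in $R'$. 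Finally, if $g$ is a unit of $R'$ then $M_g$ is invertible over $K\langle T\rangle$, so $N(g)$ is a unit of $K\langle T\rangle$; contrapositively, if $N(g)$ is not a unit, neither is $g$.

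To conclude, I would observe that $T^d$ is interior to the set of non-units of $K\langle T\rangle$: if $h=\sum_n h_nT^n$ satisfies $|h-T^d|<1$ for the Gauss norm $|\cdot|$, then $|h_d|=1>|h_0|$, so the constant coefficient of $h$ does not strictly dominate its higher coefficients, and $h$ is not a unit of $K\langle T\rangle$. Hence $U':=\{\,g\in R' : |N(g)-T^d|<1\,\}$ is open (the preimage of an open ball under the continuous $N$), contains $T$ hence is nonempty, does not contain $0$ since $N(0)=0$ and $|T^d|=1$, and contains no unit of $R'$. Pulling back along $\pi$ proves the first assertion. For the last sentence, suppose $S\subseteq R$ were a dense subring that is a field; by density $U\cap S\neq\emptyset$, and a point $s$ in this intersection is nonzero (as $0\notin U$), hence a unit of $S$ and therefore of $R$, contradicting that $U$ contains no unit.

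The one step that requires genuine care is the continuity of the relative norm $N$, which is precisely why the argument first passes to the integral quotient $R'$ (so that $R'$ becomes $K\langle T\rangle$-free and $N=\det M_{(\cdot)}$ literally makes sense) and invokes the agreement of the affinoid and finite-module topologies on $R'$. Everything else—the unit criterion for $K\langle T\rangle$, the fact that $K\langle T\rangle$ is a PID, and the final contradiction—is routine.
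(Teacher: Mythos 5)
Your proof is correct and follows the same basic strategy as the paper: apply Noether normalization to obtain a finite injection $K\langle T\rangle \hookrightarrow R'$, use the norm map $N$ down to $K\langle T\rangle$, and pull back a neighborhood of non-units. The differences are small but worth noting. First, you reduce to a domain by passing to $R/\mathfrak{p}$ for a minimal prime of maximal dimension, whereas the paper quotients by the $K\langle T\rangle$-torsion ideal $I$; either way one lands on a finite free module over the PID $K\langle T\rangle$ so that $N = \det M_{(\cdot)}$ makes sense and is continuous. Second, you center your ball at $T^d = N(T)$, which is visibly in the image of $N$, so the nonemptiness of $U'$ (and hence of $U$) is immediate. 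The paper centers at $T-\lambda$ with $0<|\lambda|<1$; this element need not lie in the image of $N$, so the paper's proof as written leaves the nonemptiness of $U = N^{-1}(U_0)$ unaddressed --- and nonemptiness is exactly what is needed for the final deduction that no dense subring is a field. Your choice quietly repairs this. Third, you observe that connectedness of $R$ is not used, only $\dim R = 1$; this is correct, since passing to the domain quotient $R/\mathfrak{p}$ with $\dim(R/\mathfrak{p}) = 1$ and pulling back along the continuous surjection $\pi$ works whether or not $R$ is connected. All the individual steps (freeness over the PID $K\langle T\rangle$, agreement of the affinoid and module topologies, the unit criterion $|h_0| > |h_n|$ for $n\geq 1$ in $K\langle T\rangle$, and the fact that $d\geq 1$ so that the $T^0$-coefficient of $h$ is forced to be small) are correctly invoked.
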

\begin{proof}
By \cite[Corollary~6.1.2/2]{bgr} (or Theorem~\ref{L:noether normalization}), 
there exists a finite injective morphism
$K \langle T \rangle \to R$. Let $I$ be the $K\langle T \rangle$-torsion submodule of $R$; then $I$ is an ideal of $R$ and $R/I$ is finite flat over $K \langle T \rangle$ of some rank $n$ (because the latter is a principal ideal domain). It will suffice to check the claim with $R$ replaced by $R/I$;
that is, we may assume hereafter that $R/I$ is finite flat over $K \langle T \rangle$ of some rank $n$.

At this point, the norm map $N\colon R \to K \langle T \rangle$ is continuous.
We may thus take $x = T - \lambda \in K \langle T \rangle$ where $\lambda \in K$ satisfies $0 < |\lambda| < 1$ and $U_0 = \{y \in K \langle T \rangle \colon |x-y| < |\lambda|\}$,
so that $U = N^{-1}(U_0)$ is an open subset of $R$ not containing $0$. 
For any $z \in U$, $N(z) \in U$ is not a unit in $K \langle T \rangle$
(by comparison of Newton polygons), and so $z$ is not a unit in $R$.
\end{proof}

\begin{lemma} \label{L:relative Nullstellensatz}
Let $A$ be a commutative Banach Tate ring. 
Let $A \langle T \rangle \to B_1$ be a rational localization. Let $\frakm_1$ be a maximal ideal of $B_1$. Then there exist a rational localization $A \to B$,
a maximal ideal $\frakm'_1$ of $B'_1 := B_1 \widehat{\otimes}_A B$ containing $\frakm_1 B'_1$, 
a maximal ideal $\frakm$ of $B$, and a finite $A$-linear morphism $B/\frakm \to B'_1/\frakm'_1$.
\end{lemma}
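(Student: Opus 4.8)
The plan is to pass to the fiber of $\calM(B_1)\to\calM(A)$ over a suitable point, invoke the affinoid Nullstellensatz there (with Lemma~\ref{lem:dense field} forcing that fiber to be $0$-dimensional), and then use Lemma~\ref{L:local finite} to spread the resulting finiteness back out over a rational localization of $A$. To set up: maximal ideals of a Banach ring are closed (the units form an open subset), so $L:=B_1/\frakm_1$ with its quotient norm is a Banach field; since $B_1$ is a rational localization of $A\langle T\rangle$ it is topologically of finite type over $A$, hence so is its quotient $L$. Choose $\alpha\in\calM(L)$, regard it as a point of $\calM(B_1)$ via $B_1\twoheadrightarrow L$, and let $\beta\in\calM(A)$ be its image and $K:=\calH(\beta)$. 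The support of $\alpha$ in $B_1$ is $\frakm_1$, so its support in $A$ is $\mathfrak{p}:=\frakm_1\cap A$; hence $K$ is the completion of $\Frac(A/\mathfrak{p})$, so that $\Frac(A/\mathfrak{p})$ is dense in $K$, while $A/\mathfrak{p}$, and therefore $\Frac(A/\mathfrak{p})$, is a subfield of $L$. I claim it suffices to show that $L\widehat{\otimes}_AK$ is finite over $K$: granting this, Lemma~\ref{L:local finite} applied to $A\to L$ (with base ring $A$ and the point $\beta$) produces a rational localization $A\to B$ with $\beta\in\calM(B)$ such that $B\to L\widehat{\otimes}_AB$ is finite.

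The crux is the finiteness of $L\widehat{\otimes}_AK$ over $K$. This ring is an affinoid algebra over $K$ (as $L$ is topologically of finite type over $A$) and a quotient of $B_1\widehat{\otimes}_AK$, which is a rational localization of $K\langle T\rangle$; so it has Krull dimension at most $1$, and it is nonzero because $\alpha$ lies in $\calM(L\widehat{\otimes}_AK)$, the fiber of $\calM(L)\to\calM(A)$ over $\beta$. The natural map $L\to L\widehat{\otimes}_AK$ is injective (its kernel is an ideal of the field $L$), and its image is dense: by $A$-bilinearity one has $c\otimes 1=1\otimes c$ in $L\otimes_AK$ for $c\in\Frac(A/\mathfrak{p})$ (where $c$ is viewed in $L$ on the left and in $K$ on the right), so the closure of the subring $L\otimes_A1$, being itself a subring, contains $1\otimes\Frac(A/\mathfrak{p})$, hence $1\otimes K$ by density, hence $(L\otimes_A1)\cdot(1\otimes K)=L\otimes_AK$. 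Now write $L\widehat{\otimes}_AK$ as a finite product of affinoid algebras with connected spectrum; $L$ projects densely into each nonzero factor, and being a field it projects either isomorphically or to zero, the latter being incompatible with density in the product, so each nonzero factor contains a dense subfield. By Lemma~\ref{lem:dense field} no such factor can be one-dimensional and connected, so every factor is $0$-dimensional; hence $L\widehat{\otimes}_AK$ is finite over $K$.

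To conclude, set $B'_1:=B_1\widehat{\otimes}_AB$. Base-changing the strict surjection $B_1\twoheadrightarrow L$ gives a strict surjection $B'_1\twoheadrightarrow L\widehat{\otimes}_AB$ whose kernel is the closure of $\frakm_1B'_1$, and $L\widehat{\otimes}_AB\neq 0$ since its base change along $B\to K$ is $L\widehat{\otimes}_AK$. Pick a maximal ideal $\mathfrak{q}$ of the finite $B$-algebra $L\widehat{\otimes}_AB$, let $\frakm'_1$ be its preimage in $B'_1$ (maximal, and containing $\frakm_1B'_1$), and put $\frakm:=\frakm'_1\cap B=\mathfrak{q}\cap B$. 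Then $B'_1/\frakm'_1\cong(L\widehat{\otimes}_AB)/\mathfrak{q}$ is finite over $B$, hence integral over the subring $B/\frakm$, which is therefore a field; so $\frakm$ is maximal and $B/\frakm\to B'_1/\frakm'_1$ is the desired finite $A$-linear morphism. The real work is the middle paragraph---controlling the geometry of the fiber $L\widehat{\otimes}_AK$---and it rests on two things: choosing $\beta$ to have support exactly $\frakm_1\cap A$, so that $L$ embeds densely in the fiber, and the input of Lemma~\ref{lem:dense field}; everything else is formal bookkeeping with completed tensor products and going-up along finite maps.
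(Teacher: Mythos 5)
Your proof is correct, and it takes a genuinely different route from the paper's.

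The paper's proof, after fixing the same point (your $\beta$, the paper's $\alpha$) and making the same appeal to Lemma~\ref{lem:dense field} to rule out a one-dimensional fiber, proceeds by an elaborate ``replacement'' scheme: it invokes relative Noether normalization (Theorem~\ref{L:noether normalization}) to reduce to the case $B_1 = A\langle T\rangle$, then picks an $f \in \frakm_1$ not vanishing identically on the fiber and applies relative Weierstrass division (Lemma~\ref{L:division}) after rescaling to produce an explicit finite map $A \to A\langle T\rangle/(f)$. Your argument bypasses both Noether normalization and Weierstrass division in this lemma: you pass directly to the field $L = B_1/\frakm_1$, prove that the fiber $L\widehat{\otimes}_A K$ is finite over $K$ by showing $L$ is dense in every connected component and invoking Lemma~\ref{lem:dense field} on each, and then apply Lemma~\ref{L:local finite} once, with $R = A$ and the pair $(A, L)$, to spread the finiteness to a rational localization $A \to B$. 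The remaining maximal-ideal bookkeeping is as you say routine. What the paper's route buys is an explicit monic polynomial witnessing finiteness, consistent with the Weierstrass machinery it develops and reuses elsewhere; what your route buys is economy of means --- the lemma sits directly on top of Lemmas~\ref{L:local finite} and~\ref{lem:dense field} alone, without threading through Theorem~\ref{L:noether normalization} or the more intricate ``refactor $A\to A\langle T'\rangle\to B_0\to B_1$'' replacement step. Your density argument for the image of $L$ (using $c\otimes 1 = 1\otimes c$ for $c \in \Frac(A/\mathfrak{p})$, which holds because such $c$ becomes a unit in $L\otimes_A K$) is also a bit more explicit than the paper's corresponding claim, and your treatment of the dimension-one case is uniform over components rather than singling one out. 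One small wording quibble: ``projects either isomorphically or to zero'' should be ``either injectively or to zero'' --- the conclusion you need (a dense subfield in each factor) is unaffected.
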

\begin{proof}
Since $\frakm_1$ is a maximal ideal, the quotient $B_1/\frakm_1$ is itself a nonzero commutative Banach ring;
its spectrum $\calM(B_1/\frakm_1)$ is therefore nonempty (but need not consist of a single point; see Remark~\ref{R:Banach field}).
Fix $\alpha \in \calM(A)$ in the image of the projection $\calM(B_1/\frakm_1) \to \calM(A)$;
over the course of the argument, we will be free to make the following changes to the input data.
\begin{itemize}
\item
Replacing $A$ with a rational localization $B$ such that $\calM(B)$ contains $\alpha$, at the same time replacing $B_1$ with $B_1 \widehat{\otimes}_A B$.
\item
Replacing $B_1$ with a ring $B_0$ such that $A \to B_1$ can be refactored as 
$A  \to A\langle T'\rangle \to B_0 \to B_1$ where the middle morphism is a rational localization, and the last morphism is finite,
at the same time replacing $T$ and $\frakm_1$ with $T'$ and $\frakm_1 \cap B_0$, respectively. (By the going up theorem,
$\frakm_1 \cap B_0$ is a maximal ideal of $B_0$.)
\end{itemize}

Let $U_\alpha$ be the locus of $\pi^{-1}(\alpha) \cong \calM(B_1 \widehat{\otimes}_A \calH(\alpha))$ cut out by $\frakm_1$.
As a nonempty Zariski closed subspace of a one-dimensional affinoid space over the nonarchimedean field $\calH(\alpha)$, $U_\alpha$ has a well-defined dimension which is 0 or 1, and which remains unchanged under any replacement of the input data as described above.

We will show by way of contradiction that the dimension of $U_\alpha$ cannot be 1.
Suppose the contrary; then $\frakm_1$ maps to zero in some connected component $R$ of $B_1 \widehat{\otimes}_A \calH(\alpha)$. This means that the field $B_1/\frakm_1$ maps to the dense subring of $R$ generated by $B_1 \otimes_A \Frac (A/(\frakm_1 \cap A))$, yielding a contradiction against Lemma~\ref{lem:dense field}.

By the previous paragraph, $U_\alpha$ is a zero-dimensional rigid analytic space over $\calH(\alpha)$.
By Theorem~\ref{L:noether normalization}, after replacing the input data we may reduce to the case $B_1 = A \langle T \rangle$.
(Note that this step is not necessary if we begin with $B_1 = A \langle T \rangle$, but that special case will not be sufficient for our later arguments.)
 Since $U_\alpha \neq \calM(\calH(\alpha) \langle T \rangle)$,
we can now find some $f \in \frakm_1$ which does not vanish identically on $\calM(\calH(\alpha) \langle T \rangle)$. 
After replacing the input data and rescaling, we may assume that $f$ is of the form as in Lemma~\ref{L:division}.
Then by Lemma~\ref{L:division}, $A\to A\langle T\rangle/(f)$ is finite, so the maximal ideal $\frakm_1 A \langle T \rangle/(f)$ contracts to a maximal ideal $\frakm$ of $A$ and the claim follows.
\end{proof}
\begin{definition}
We say that a commutative Banach Tate ring $A$ has the \emph{weak Nullstellensatz property} if 
for every rational localization $A \to B$
and every maximal ideal $\frakm$ of $B$, the ring homomorphism $A/(\frakm \cap A) \to B/\frakm$ is finite. This implies in particular that $\frakm \cap A$ is maximal in $A$.

If in addition for every maximal ideal $\frakn$ of $A$, $A/\frakn$ always carries the topology defined by some multiplicative norm, we say that $A$ has the \emph{strong Nullstellensatz property}. 
In this case, for every maximal ideal $\frakm$ of $B$, $A/(\frakm \cap A) \to B/\frakm$ is an isomorphism\footnote{Since $A/(\frakm \cap A) \to B/(\frakm \cap A)B\neq 0$ is a rational localization of the nonarchimedean field $A/(\frakm \cap A)$, $A/(\frakm \cap A) \to B/(\frakm \cap A)B$ is an isomorphism and $B/(\frakm \cap A)B$ is also a nonarchimedean field. In particular, $\frakm=(\frakm \cap A)B$ and $A/(\frakm \cap A) \to B/\frakm$ is an isomorphism. We note that if $A/(\frakm \cap A)$ is merely a Banach field, then $A/(\frakm \cap A) \to B/(\frakm \cap A)B\neq 0$ is not necessarily an isomorphism.}.
In particular, $B/\frakm$ is also a nonarchimedean field.
\end{definition}

\begin{remark} \label{R:Banach field}
For $A$ a commutative Banach Tate ring, it is possible for the underlying ring of $A$ to be a field 
but for the topology of $A$ not to be defined by a multiplicative norm; this phenomenon is discussed at length in \cite{banach-field}. 
Such a ring $A$ does not have the strong Nullstellensatz property.

However, all of the examples considered in \cite{banach-field} have the property that $\calM(A)$ is a single point. This means that any rational localization $A \to B$ is an isomorphism, so $A$ trivially has the weak Nullstellensatz property. 
In fact, we do not have an example for which the weak Nullstellensatz property fails, though it seems that this should be possible.
\end{remark}

\begin{lemma} \label{L:induction on Null}
Let $A$ be a commutative Banach Tate ring with the weak Nullstellensatz property.
For $\phi\colon A \langle T \rangle \to B_1$ a rational localization and $\frakm_1$ a maximal ideal of $B_1$, the induced map $A/(\phi^{-1}(\frakm_1) \cap A) \to B_1/\frakm_1$ is finite.
\end{lemma}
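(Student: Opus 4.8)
The plan is to combine Lemma~\ref{L:relative Nullstellensatz} with the weak Nullstellensatz hypothesis and then chase contractions of ideals. Write $\mathfrak{p} := \phi^{-1}(\frakm_1) \cap A$ for the ideal occurring in the statement; note $\mathfrak{p}$ is proper since $1 \notin \frakm_1$. Apply Lemma~\ref{L:relative Nullstellensatz} to $\phi$ and $\frakm_1$ to obtain a rational localization $A \to B$, a maximal ideal $\frakm'_1$ of $B'_1 := B_1 \widehat{\otimes}_A B$ with $\frakm_1 B'_1 \subseteq \frakm'_1$, a maximal ideal $\frakm$ of $B$, and a finite $A$-linear ring morphism $B/\frakm \to B'_1/\frakm'_1$. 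Set $\mathfrak{q} := \frakm \cap A$; since $A$ has the weak Nullstellensatz property and $A \to B$ is a rational localization, $\mathfrak{q}$ is maximal in $A$ and $A/\mathfrak{q} \to B/\frakm$ is finite.

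Next I would record two identities for the contraction of $\frakm'_1$. Since $\frakm_1 B'_1 \subseteq \frakm'_1$ and $1 \notin \frakm'_1$, the ideal $\frakm'_1 \cap B_1$ is proper and contains $\frakm_1$, hence equals $\frakm_1$ by maximality; in particular $B_1/\frakm_1 \hookrightarrow B'_1/\frakm'_1$. Computing the contraction of $\frakm'_1$ to $A$ along the factorization of the structure map $A \to B'_1$ through $A \to A \langle T \rangle \xrightarrow{\phi} B_1 \to B'_1$, we get $\frakm'_1 \cap A = \phi^{-1}(\frakm_1) \cap A = \mathfrak{p}$. On the other hand, $A$-linearity of the morphism $B/\frakm \to B'_1/\frakm'_1$ says precisely that the structure map $A \to B'_1/\frakm'_1$ factors through $B/\frakm$; since $A \to B/\frakm$ kills $\frakm \cap A$, it follows that $\mathfrak{q} = \frakm \cap A \subseteq \frakm'_1 \cap A = \mathfrak{p}$. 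As $\mathfrak{q}$ is maximal and $\mathfrak{p}$ is proper, we conclude $\mathfrak{p} = \mathfrak{q}$, so $A/\mathfrak{p}$ is a field. (Alternatively, inspecting the proof of Lemma~\ref{L:relative Nullstellensatz} shows the morphism $B/\frakm \to B'_1/\frakm'_1$ is the one induced by $B \to B'_1$, whence $\frakm'_1 \cap B = \frakm$ and $\mathfrak{p} = \frakm'_1 \cap A = \mathfrak{q}$ directly.)

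Finiteness now follows by linear algebra over the field $A/\mathfrak{p}$. The composite $A/\mathfrak{p} = A/\mathfrak{q} \to B/\frakm \to B'_1/\frakm'_1$ is finite (being a composite of finite maps, the first by the weak Nullstellensatz property and the second by Lemma~\ref{L:relative Nullstellensatz}), so $B'_1/\frakm'_1$ is a finite-dimensional $A/\mathfrak{p}$-vector space. The inclusion $B_1/\frakm_1 \hookrightarrow B'_1/\frakm'_1$ is $A/\mathfrak{p}$-linear, since it is a morphism of $A$-algebras and $\mathfrak{p}$ acts as zero on both sides ($\mathfrak{p} = \frakm_1 \cap A$ on the left, $\mathfrak{p} = \frakm'_1 \cap A$ on the right). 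Hence $B_1/\frakm_1$ is an $A/\mathfrak{p}$-subspace of a finite-dimensional space, so it is finite over $A/\mathfrak{p} = A/(\phi^{-1}(\frakm_1) \cap A)$, which is the assertion.

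The main point requiring care is verifying that the contraction $\frakm'_1 \cap A$ is computed consistently with the two ways $A$ maps into $B'_1$ and that the $A$-linearity clause of Lemma~\ref{L:relative Nullstellensatz} genuinely forces $\mathfrak{q} \subseteq \mathfrak{p}$; once these are pinned down, everything else is routine bookkeeping, with all substantive input supplied by Lemma~\ref{L:relative Nullstellensatz} and the weak Nullstellensatz hypothesis on $A$.
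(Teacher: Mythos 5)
Your proof is correct and follows essentially the same route as the paper: apply Lemma~\ref{L:relative Nullstellensatz}, use the weak Nullstellensatz property to make $A/(\frakm \cap A) \to B/\frakm$ finite, compose, and then ``refactor'' the finite map $A/(\frakm\cap A)\to B'_1/\frakm'_1$ through $B_1/\frakm_1$ to conclude. The only difference is that you spell out the bookkeeping (the injectivity of $B_1/\frakm_1 \hookrightarrow B'_1/\frakm'_1$ and the two computations of $\frakm'_1 \cap A$) that the paper compresses into the phrase ``by refactoring through $B_1/\frakm_1$''.
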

\begin{proof}
Set notation as in Lemma~\ref{L:relative Nullstellensatz}; then
\[
A/(\frakm \cap A) \to B/\frakm \to B'_1/\frakm'_1
\]
is a finite morphism; in particular, $\frakm \cap A$ is a maximal ideal of $A$.
By refactoring through $B_1/\frakm_1$, we see that $A/(\frakm \cap A) \to B_1/\frakm_1$ is finite;
in particular, $\frakm \cap A = \phi^{-1}(\frakm_1) \cap A$.
\end{proof}

\begin{theorem}[Relative Nullstellensatz] \label{T:Nullstellensatz property}
Let $A$ be a commutative Banach Tate ring with the weak (resp.\ strong) Nullstellensatz property.
Then for every nonnegative integer $n$, the following statements hold.
\begin{enumerate}
\item[(a)]
The ring $A \langle T_1,\dots,T_n \rangle$ has the weak (resp.\ strong) Nullstellensatz property.
\item[(b)]
For every finite morphism $\phi\colon A \langle T_1,\dots,T_n \rangle \to B_n$ and every maximal ideal $\frakm_n$ of $B_n$, the ring homomorphism $A/(\phi^{-1}(\frakm_n) \cap A) \to B_n/\frakm_n$ is finite.
\end{enumerate}
\end{theorem}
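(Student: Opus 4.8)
The plan is to prove (a) and (b) simultaneously by induction on $n$. The two ingredients are Lemma~\ref{L:induction on Null} and the elementary commutative-algebra fact that a subring over which a field is module-finite is itself a field; in particular, if $A \to C$ is a ring map with $C$ finite over $A$ and $\frakn$ is a maximal ideal of $C$, then $\frakn \cap A$ is maximal and $A/(\frakn \cap A) \to C/\frakn$ is finite. I will also repeatedly use the special case of Lemma~\ref{L:induction on Null} obtained by taking $B_1 = A'\langle T \rangle$ with the structure map the identity: if $A'$ has the weak Nullstellensatz property, then $A'/(\frakm \cap A') \to A'\langle T \rangle/\frakm$ is finite for every maximal ideal $\frakm$ of $A'\langle T \rangle$.

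For $n = 0$, part (a) is exactly the hypothesis on $A$, and part (b) is the elementary fact applied to the finite map $\phi \colon A \to B_0$. Now fix $n \geq 1$, assume (a) and (b) for $n-1$, and set $R := A\langle T_1,\dots,T_{n-1} \rangle$, so that $A\langle T_1,\dots,T_n \rangle = R\langle T_n \rangle$ and $R$ has the weak Nullstellensatz property by the inductive hypothesis. To prove (a) at level $n$, take a rational localization $\psi \colon R\langle T_n \rangle \to C$ and a maximal ideal $\frakn$ of $C$; Lemma~\ref{L:induction on Null} applied to $R$, $\psi$, and $\frakn$ shows that $R/(\frakn \cap R) \to C/\frakn$ is finite, where contractions are taken along $R \to R\langle T_n \rangle \to C$. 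Inserting the ring $R\langle T_n \rangle/(\frakn \cap R\langle T_n \rangle)$ into the factorization $R/(\frakn \cap R) \hookrightarrow R\langle T_n \rangle/(\frakn \cap R\langle T_n \rangle) \hookrightarrow C/\frakn$ and applying the elementary fact to the finite extension $R\langle T_n \rangle/(\frakn \cap R\langle T_n \rangle) \hookrightarrow C/\frakn$ shows that $\frakn \cap R\langle T_n \rangle$ is maximal and $R\langle T_n \rangle/(\frakn \cap R\langle T_n \rangle) \to C/\frakn$ is finite; hence $A\langle T_1,\dots,T_n \rangle$ has the weak Nullstellensatz property.

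For (b) at level $n$, take a finite morphism $\phi \colon R\langle T_n \rangle \to B_n$ and a maximal ideal $\frakm_n$ of $B_n$, and set $\frakn := \frakm_n \cap R\langle T_n \rangle$; by the elementary fact this is maximal and $R\langle T_n \rangle/\frakn \to B_n/\frakm_n$ is finite. The special case of Lemma~\ref{L:induction on Null} recorded above (applied with $A' = R$) gives that $R/(\frakn \cap R) \to R\langle T_n \rangle/\frakn$ is finite, so $\frakn \cap R$ is maximal, and the inductive hypothesis (b) applied to the identity map of $R$ and the maximal ideal $\frakn \cap R$ gives that $A/(\frakn \cap A) \to R/(\frakn \cap R)$ is finite; composing the three finite maps, and noting $\frakn \cap A = \phi^{-1}(\frakm_n) \cap A$, yields (b). For the ``strong'' versions, observe that the strong property entails the weak one, so (a) already gives that $A\langle T_1,\dots,T_n \rangle$ has the weak Nullstellensatz property; it remains to note that, by (b), each residue field $A\langle T_1,\dots,T_n \rangle/\frakn$ is finite over the field $A/(\frakn \cap A)$, which is a nonarchimedean field by the strong hypothesis, so $A\langle T_1,\dots,T_n \rangle/\frakn$ is a finite extension field of it and therefore carries a multiplicative norm, which induces its topology since all norms on a finite-dimensional vector space over a complete field are equivalent.

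The main obstacle is keeping the bookkeeping straight: each invocation of Lemma~\ref{L:induction on Null} and of the inductive hypothesis must be made with the correct ring in the role of the base, and the structural point that makes this work is the identification $A\langle T_1,\dots,T_n \rangle = A\langle T_1,\dots,T_{n-1} \rangle\langle T_n \rangle$, which lets the one-variable Lemma~\ref{L:induction on Null} descend a maximal ideal one variable at a time down to the level where the inductive hypothesis applies. A minor additional point, needed only for the strong case, is verifying that the quotient topology on the residue field $A\langle T_1,\dots,T_n \rangle/\frakn$ really agrees with its valuation topology, for which one invokes that a finite module over a nonarchimedean field carries a unique Hausdorff Banach topology.
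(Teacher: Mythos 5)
Your proof is correct and follows essentially the same strategy as the paper: factor $A\langle T_1,\dots,T_n\rangle$ as $A\langle T_1,\dots,T_{n-1}\rangle\langle T_n\rangle$, reduce to the one-variable case, and propagate the Nullstellensatz property via Lemma~\ref{L:induction on Null} together with the elementary fact that a subring of a field over which the field is module-finite is itself a field. You are somewhat more explicit than the paper, particularly in spelling out the induction, the factorization bookkeeping, and the verification that the quotient topology on $A\langle T_1,\dots,T_n\rangle/\frakn$ is the valuation topology of a finite extension field in the strong case, but the underlying argument is the same.
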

\begin{proof}
To deduce (a), it suffices to check the case $n=1$.
In this case, writing $T$ for $T_1$, with notation as in Lemma~\ref{L:induction on Null},
the finite map $A/(\phi^{-1}(\frakm_1) \cap A) \to B_1/\frakm_1$  factors through
$A \langle T \rangle/\phi^{-1}(\frakm_1) \to B_1/\frakm_1$, so the latter is finite.
This confirms that $A \langle T \rangle$ has the weak (resp. strong) Nullstellensatz property.

To deduce (b), by the going up theorem we formally reduce to the case $B = A \langle T_1,\dots,T_n \rangle$; this case in turn formally reduces to the case $n=1$, which we deduce immediately from Lemma~\ref{L:induction on Null}.
\end{proof}

\begin{corollary}\label{C:Nullstellensatz property topologically of finite type}
Let $A$ be a commutative Banach Tate ring with the weak (resp.\ strong) Nullstellensatz property, and let $B$ be a commutative Banach $A$-algebra topologically of finite type.
Then, $B$ also has the weak (resp.\ strong) Nullstellensatz property, and for every maximal ideal $\frakm$ of $B$, $\frakm \cap A$ is a maximal ideal of $A$ and the ring homomorphism $A/(\frakm \cap A) \to B/\frakm$ is finite.
\end{corollary}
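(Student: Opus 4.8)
The plan is to reduce everything to Theorem~\ref{T:Nullstellensatz property} by fixing a presentation of $B$. Since $B$ is topologically of finite type over $A$, there is a strict surjection $\phi \colon A \langle T_1,\dots,T_n \rangle \to B$ for some $n$; being a quotient map, $\phi$ exhibits $B$ as a cyclic, hence finite, Banach $A \langle T_1,\dots,T_n \rangle$-module, so $\phi$ is a finite morphism. Thus for any maximal ideal $\frakm$ of $B$, Theorem~\ref{T:Nullstellensatz property}(b) applied to $\phi$ and $\frakm$ shows that $\phi^{-1}(\frakm) \cap A = \frakm \cap A$ is maximal in $A$ and that $A/(\frakm \cap A) \to B/\frakm$ is finite; this is exactly the final assertion of the corollary.

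It remains to establish the weak Nullstellensatz property for $B$ (and, under the stronger hypothesis, the extra condition on residue fields). Let $B \to D$ be a rational localization and let $\frakm$ be a maximal ideal of $D$. A rational localization is topologically of finite type, so $D$ is topologically of finite type over $A$; applying the previous paragraph with $D$ in place of $B$, we get that $\mathfrak{p} := \frakm \cap A$ is maximal in $A$ and $A/\mathfrak{p} \to D/\frakm$ is finite. Put $\mathfrak{q} := \frakm \cap B$, a prime ideal of $B$ with $\mathfrak{q} \cap A = \mathfrak{p}$, so that we have ring maps $A/\mathfrak{p} \hookrightarrow B/\mathfrak{q} \hookrightarrow D/\frakm$ with finite composite. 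Since $A/\mathfrak{p}$ is a field, $D/\frakm$ is a finite-dimensional $A/\mathfrak{p}$-vector space, hence so is the intermediate subring $B/\mathfrak{q}$; being a domain that is a finite-dimensional algebra over a field, $B/\mathfrak{q}$ is a field, so $\mathfrak{q}$ is maximal in $B$, and $D/\frakm$, being finite over $A/\mathfrak{p} \subseteq B/\mathfrak{q}$, is finite over $B/\mathfrak{q}$. This gives the weak Nullstellensatz property for $B$.

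Now assume $A$ has the strong Nullstellensatz property, so that $A \langle T_1,\dots,T_n \rangle$ does too by Theorem~\ref{T:Nullstellensatz property}(a). Writing $I = \ker \phi$, every maximal ideal $\frakn$ of $B = A \langle T_1,\dots,T_n \rangle / I$ is of the form $\widetilde{\frakn}/I$ for the maximal ideal $\widetilde{\frakn} = \phi^{-1}(\frakn)$ of $A \langle T_1,\dots,T_n \rangle$, and $B/\frakn$ is canonically isomorphic as a topological ring to $A \langle T_1,\dots,T_n \rangle / \widetilde{\frakn}$ (both carry the quotient topology induced from $A \langle T_1,\dots,T_n \rangle$). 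The latter carries a multiplicative norm by the strong Nullstellensatz property of $A \langle T_1,\dots,T_n \rangle$, hence so does $B/\frakn$; combined with the weak Nullstellensatz property just proved, this shows $B$ has the strong Nullstellensatz property.

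The argument is essentially bookkeeping once Theorem~\ref{T:Nullstellensatz property} is in hand. The two points that deserve a little care are the observation that a strict surjection is a finite morphism — this is what lets us feed a presentation of $B$ into part~(b) of the theorem — and, in the strong case, the identification of the quotient topology on $B/\frakn$ with that on $A \langle T_1,\dots,T_n \rangle / \widetilde{\frakn}$, so that the existence of a multiplicative norm transfers across the isomorphism. I do not expect any essential difficulty beyond these.
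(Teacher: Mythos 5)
Your proof is correct, and it is a careful unpacking of the paper's terse ``follows from the proof of Theorem~\ref{T:Nullstellensatz property}.'' You reduce to a strict (hence finite) surjection $A\langle T_1,\dots,T_n\rangle \to B$, invoke part~(b) for the finiteness of $A/(\frakm\cap A)\to B/\frakm$, and then propagate the weak Nullstellensatz property to $B$ by pulling back maximal ideals of a rational localization $D$ and using the ``subring of a finite-dimensional domain over a field is a field'' argument; the strong case is handled by identifying $B/\frakn$ with $A\langle T_1,\dots,T_n\rangle/\widetilde{\frakn}$ as topological rings. This is essentially the same strategy the authors have in mind, so I see no gap; the two points you flag (strict surjection is finite, and the quotient-topology identification for the strong property) are exactly the ones worth being explicit about.
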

\begin{proof}
This follows from the proof of Theorem~\ref{T:Nullstellensatz property}.
\end{proof}

\begin{remark}
Let $A$ be a commutative Banach Tate ring.
Then $A$ has the weak Nullstellensatz property if and only if for any commutative Banach $A$-algebra $B$ topologically of finite type and for any maximal ideal $\frakm$ of $B$, the ring homomorphism $A/(\frakm \cap A) \to B/\frakm$ is finite.
\end{remark}

\begin{remark} \label{R:Jacobson-Tate}
Let $A$ be a strongly noetherian commutative Banach Tate ring.
From Corollary~\ref{C:Nullstellensatz property topologically of finite type}, we find that $A$ has the strong Nullstellensatz property if and only if it is a \emph{Jacobson-Tate ring} in the sense of \cite[Definition~3.1]{Lou17}. 
Consequently, \cite[Proposition~4.6]{Lou17} implies that for $K$ a discretely valued nonarchimedean field, any pseudoaffinoid algebra over $\frako_K$ has the strong Nullstellensatz property.
Moreover, a strongly noetherian commutative Banach Tate ring $A$ with the strong Nullstellensatz property is a Jacobson ring by \cite[Proposition~3.3]{Lou17}.
\end{remark}

\section{An application to Fargues--Fontaine curves}

It is evident from our definitions that any nonarchimedean field has the strong Nullstellensatz property. In this way, we recover from Theorem~\ref{T:Nullstellensatz property} the usual Nullstellensatz in rigid analytic geometry \cite[Corollary~6.1.2/3]{bgr}; however, this does not yield a totally independent derivation of that result because we used the main ingredient (Noether normalization) in the proof of Theorem~\ref{L:noether normalization}. 
More to the point is that we can identify some additional examples of rings with the strong Nullstellensatz property, as in the following examples arising from the geometry of Fargues--Fontaine curves.
Let us recall relevant rings defined in \cite{noeth}.

\begin{definition}
Let $p$ be a prime, and $q$ be a power of $p$, let $L$ be a perfect field containing $\FF_q$ which is complete with respect to the nontrivial multiplicative nonarchimedean norm $|-|$, let $E$ be a complete discretely valued field whose residue field is $\FF_q$, and fix a uniformizer $\varpi\in E$.
Let $\frako_L,\frako_E$ denote the valuation subrings of $L, E$.
Define the rings 
\[
A_{L,E}=W(\frako_L)_E[[\overline{x}]\colon \overline{x}\in L], \quad B_{L,E}=A_{L,E}\otimes_{\frako_E}E.
\]
For $t\in (0,+\infty)$, we define the multiplicative norm $\lambda_t \colon B_{L,E} \to \RR_{\geq 0}$ by the formula
\[
\lambda_t\left( \sum_{n\in \ZZ} \varpi^n [\overline{x}_n]\right)=\max\{p^{-n}|\overline{x}_n|^t\}.
\]
For $r>0$, we define $A_{L,E}^r$ as the completion of $A_{L,E}$ with respect to $\lambda_r$, and for $I=[s,r]\subset (0,+\infty)$, we define $B_{L,E}^I$ as the completion of $B_{L,E}$ with respect to the norm $\lambda_I=\max\{\lambda_s,\lambda_r\}$.
\end{definition}

\begin{remark}\label{R:rational localization}
We define the set $\Sigma_L$ as 
\[
\{r\in \RR_{>0} \mid \mbox{there is a rational number $m>0$ and $\overline{x}\in L^{\times}$ such that $r=-m/\log_p |\overline{x}|$}\}.
\]
Then for $r\in \Sigma_L$ as above, $A_{L,E}^r$ is the rational localization of $W(\frako_L)_E$ defined by $|p|^m \leq |[\overline{x}]|$.
Moreover, for $I=[s,r]\subset (0,r']\subset (0,+\infty)$ such that $r', r, s \in \Sigma_L$, $A_{L,E}^r$ and $B_{L,E}^I$ are rational localizations of $A_{L,E}^{r'}$.
\end{remark}

We adopt notation from \cite{FS24} for Fargues--Fontaine curves, except that the field $C$ in \cite{FS24} corresponds to our $L$.
Let $\hat{\overline{L}}$ be a completed algebraic closure of $L$, and let $G_L$ be the absolute Galois group of $L$.

\begin{definition}
We say that a point $x \in |\mathcal{Y}_L|$ \textit{corresponds to an untilt of a finite extension of $L$} if the natural map $\Spd \calH(x)\to \mathcal{Y}_L^{\diamond}=\Spd E\times \Spd L \to \Spd L$ of diamonds is finite \'{e}tale, where $\calH(x)$ is the completed residue field of $x$.
\end{definition}

\begin{lemma} \label{L:classical point}
Let $x \in |\mathcal{Y}_L|$ be a point corresponding to an untilt of a finite extension of $L$, and $\Spa(A,A^+)\subset \mathcal{Y}_L$ be an affinoid subspace containing $x$.
Then, the natural morphism $A\to \calH(x)$ is surjective.
In particular, for the maximal ideal $\frakm=\Ker(A\to \calH(x))$, $A/\frakm\cong \calH(x)$ is a nonarchimedean field. 
\end{lemma}
\begin{proof}
Let $L_1/L$ be a finite extension corresponding to the natural map $\Spd \calH(x)\to \Spd L$.
By \cite[Proposition~11.2.15]{FF18}, the natural map $\mathcal{Y}_{L_1}\to \mathcal{Y}_L$ is finite \'{e}tale.
Therefore, by replacing $L$ with $L_1$, we may assume that the map  $\Spd \calH(x)\to \Spd L$ is an isomorphism.
Then, the point $x$ provides the map $\Spd L\cong \Spd \calH(x) \to \Spd E\times \Spd L\to \Spd E$, which corresponds to an untilt $L^{\#}$ of $L$.
By \cite[Proposition II.1.4]{FS24}, we get a closed Cartier divisor $\Spa L^{\#}\to \mathcal{Y}_L$ whose image is $x$.
From this, we obtain $\Spa L^{\#}\cong \Spa \calH(x)$.
In particular, $\Spa \calH(x) \to \mathcal{Y}_L$ is a closed Cartier divisor, which proves the claim.
\end{proof}

\begin{lemma} \label{L:finite preimage}
A point $x \in |\mathcal{Y}_L|$ corresponds to an untilt of a finite extension of $L$ if and only if its preimage in $|\mathcal{Y}_{\hat{\overline{L}}}|$ is a finite set of classical points.
\end{lemma}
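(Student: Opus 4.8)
The strategy is to leverage the known structure of the projection $\pi\colon|\mathcal{Y}_{\hat{\overline{L}}}|\to|\mathcal{Y}_L|$ recorded in \cite{FS24} and \cite{noeth}: it is the inverse limit, over the finite subextensions $L'$ of $\overline{L}/L$ (all separable, as $L$ is perfect), of the projections $\pi_{L'}\colon|\mathcal{Y}_{L'}|\to|\mathcal{Y}_L|$, each induced by a finite \'etale morphism $\mathcal{Y}_{L'}\to\mathcal{Y}_L$ of degree $[L':L]$ on which $\mathrm{Gal}(L'/L)$ acts with quotient $\mathcal{Y}_L$. Thus $\pi^{-1}(x)=\varprojlim_{L'}\pi_{L'}^{-1}(x)$ is a cofiltered limit of nonempty finite sets along surjective transition maps. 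I will also use the standard dictionary for the algebraically closed case: the classical points of $\mathcal{Y}_{\hat{\overline{L}}}$ are exactly the untilts of $\hat{\overline{L}}$, each having as completed residue field an algebraically closed perfectoid field whose tilt is canonically $\hat{\overline{L}}$.

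For the ``only if'' direction, suppose $x$ corresponds to an untilt $M$ of a finite extension $L_x$ of $L$, so $\calH(x)=M$, $M^\flat=L_x$, and the structure map $W(\frako_L)_E\to M$ factors through the untilt map $W(\frako_{L_x})_E\to M$. Choose a finite Galois extension $L'/L$ with $L'\supseteq L_x$, so that $L'\otimes_L L_x\cong (L')^{[L_x:L]}$. From the explicit (Witt-vector) description of the cover $\mathcal{Y}_{L'}\to\mathcal{Y}_L$ one computes that the fibre over $x$ is $\Spa$ of a product of $[L_x:L]$ untilts of $L'$; that is, $\pi_{L'}^{-1}(x)$ is a set of $[L_x:L]$ classical points of $\mathcal{Y}_{L'}$, each corresponding to an untilt of $L'$. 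The same sort of computation shows that an untilt of $L'$, viewed as a point of $\mathcal{Y}_{L'}$, has a unique preimage in $\mathcal{Y}_{\hat{\overline{L}}}$ — the corresponding untilt of $\hat{\overline{L}}$, which is classical — so for every finite $L''\supseteq L'$ the transition map $\pi_{L''}^{-1}(x)\to\pi_{L'}^{-1}(x)$ is a bijection. Hence $\pi^{-1}(x)$ is a set of exactly $[L_x:L]$ classical points.

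For the ``if'' direction, suppose $\pi^{-1}(x)$ is a finite set of classical points. Since the cardinalities $|\pi_{L'}^{-1}(x)|$ are nondecreasing in $L'$ and bounded by $|\pi^{-1}(x)|$, they stabilize: there is a finite Galois extension $L_0/L$ with $\pi_{L'}^{-1}(x)\xrightarrow{\ \sim\ }\pi_{L_0}^{-1}(x)$ for all finite $L'\supseteq L_0$. Fix $x_0\in\pi_{L_0}^{-1}(x)$. Then $x_0$ has a unique preimage $y\in|\mathcal{Y}_{\hat{\overline{L}}}|$, which is classical, and for each finite $L''/L_0$ a unique preimage $x''\in|\mathcal{Y}_{L''}|$, with $\calH(x'')/\calH(x_0)$ Galois of group $\mathrm{Gal}(L''/L_0)$ (the $\mathrm{Gal}(L''/L_0)$-action fixes the singleton fibre over $x_0$). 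Hence $\widetilde{N}:=\varinjlim_{L''}\calH(x'')$ is Galois over $\calH(x_0)$ with group $G_{L_0}$ and completion $\calH(y)$, an algebraically closed perfectoid field with tilt $\hat{\overline{L}}$; one concludes that $\calH(x_0)$ is itself a perfectoid field with $\calH(x_0)^\flat\cong L_0$, i.e.\ that $x_0$ corresponds to an untilt of $L_0$. Finally $\calH(x)\hookrightarrow\calH(x_0)$ is a finite extension, so $\calH(x)$ is a perfectoid field (the perfectoid property of a complete rank-one valued field is both preserved and reflected by finite extensions) and $\calH(x)^\flat$ is a subfield of $L_0$, hence finite over $L$; so $x$ corresponds to an untilt of the finite extension $\calH(x)^\flat$ of $L$.

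The main obstacle is the step in the ``if'' direction that passes from ``$x_0$ has a single classical preimage'' to ``$\calH(x_0)$ is a perfectoid field with tilt $L_0$''. Here one must rule out that $x_0$ is a non-classical point of $\mathcal{Y}_{L_0}$ — say a Gauss-type rank-one point, or a higher-rank point — whose unique preimage $y$ is nevertheless classical. The key is that the covers $\mathcal{Y}_{L''}\to\mathcal{Y}_{L_0}$ are \emph{\'etale}, so they induce only unramified extensions $\calH(x'')/\calH(x_0)$, which cannot promote a non-perfectoid complete residue field to a perfectoid one; thus non-perfectoidness of $\calH(x_0)$ would propagate to $\calH(y)$, a contradiction. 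One must also observe that if $\calH(x_0)$ were perfectoid but $\calH(x_0)^\flat$ strictly contained $L_0$, the fibres $\pi_{L'}^{-1}(x)$ would fail to stabilize (so $\pi^{-1}(x)$ would be infinite), and invoke the stability of the perfectoid property under finite extensions. Granting these points, and the explicit fibre computations in the ``only if'' direction, the proof is mostly an assembly of the structural results of \cite{FS24} and \cite{noeth}.
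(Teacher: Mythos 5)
Your ``only if'' direction is fine and agrees in substance with the paper's one-line argument (the preimage is the set of embeddings of the finite extension into $\hat{\overline{L}}$). Your reduction at the start of the ``if'' direction — using the tower $\mathcal{Y}_{L'} \to \mathcal{Y}_L$ and arguing that the finite fibre sizes stabilize — is an unwinding of the paper's argument that the continuous $G_L$-action on the finite preimage factors through a finite quotient, so that after replacing $L$ by a finite extension we may assume the preimage is a single classical point. Up to that point the two proofs match.

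The gap is in your final descent step, which is exactly where the paper instead invokes the v-covering argument of \cite[Proposition~II.1.9]{FS24}. You assert that because the covers $\mathcal{Y}_{L''}\to\mathcal{Y}_{L_0}$ are \'etale, the extensions $\calH(x'')/\calH(x_0)$ are \emph{unramified}, so that non-perfectoidness of $\calH(x_0)$ would propagate up the tower. This is false: ``finite \'etale'' for adic spaces (equivalently, for the residue fields, ``finite separable'') permits arbitrary ramification; a totally ramified extension of nonarchimedean fields is perfectly \'etale in this sense. Indeed, if $L''/L_0$ is a (tamely) ramified extension of characteristic-$p$ perfectoid fields, the induced extension of untilts is ramified. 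So there is no reason the individual $\calH(x'')/\calH(x_0)$ are unramified, and even if they were, the passage to the completed direct limit $\widehat{\varinjlim \calH(x'')} = \calH(y)$ is precisely the operation that can create a perfectoid field out of non-perfectoid ones (as with $\widehat{\QQ_p(\mu_{p^\infty})}$). Your proposed ``propagation of non-perfectoidness'' is therefore not available, and you yourself flag this as the crux. The paper avoids this issue entirely: once the preimage is a single classical point, one descends the untilt structure along the v-cover $\Spa \hat{\overline{L}} \to \Spa L$ as in \cite[Proposition~II.1.9]{FS24}, which is the clean way to conclude that $x$ corresponds to an untilt of $L$. Replacing your final paragraph with that citation (or a careful v-descent argument) would complete the proof; as written, the argument does not close.
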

\begin{proof}
If $x$ corresponds to an untilt of a finite extension $L_1$ of $L$, then its preimage
consists of the classical points corresponding to the embeddings of $L_1$ into $\hat{\overline{L}}$.
Conversely, if the preimage of $x$ is finite, then the continuous action of $G_L$ on $|\mathcal{Y}_{\hat{\overline{L}}}|$ restricts to a continuous action on the preimage, which then becomes trivial on some closed-open subgroup of $G_L$. By replacing $L$ with the finite extension fixed by this closed-open subgroup, we reduce to the case where the preimage of $x$ is a single classical point. 
Since $\Spa \hat{\overline{L}} \to \Spa L$ is a v-covering, we may argue as in the first paragraph of the proof of \cite[Proposition~II.1.9]{FS24} to conclude that $x$ corresponds to an untilt of $L$ itself.
\end{proof}

\begin{lemma}\label{L:dim0}
For $r\notin \Sigma_L$, $B^{[r,r]}_{L,E}$ is a nonarchimedean field.
\end{lemma}
\begin{proof}
Since the topology of $B^{[r,r]}_{L,E}$ is defined by the multiplicative norm $\lambda_t$, it is enough to show that  $B^{[r,r]}_{L,E}$ is a field.
Let $x\in B^{[r,r]}_{L,E}\setminus\{0\}.$
Since $B^{[r,r]}_{L,E}$ is the completion of $B_{L,E}$ with respect to the norm $\lambda_t$, there is an element $y\in B_{L,E}$ such that $\lambda_t(x-y)<\lambda_t(y)$. 
The element $y$ can be written as $\sum_{n\geq n_0} \varpi^n[\overline{x}_n]$ for some integer $n_0$.
By replacing $y$, we may assume that this expansion is a finite sum.
Since $r\notin \Sigma_L$, $\lambda_t(\varpi^n[\overline{x}_n])$ are pairwise distinct.
Therefore, we may replace $y$ with $\varpi^n[\overline{x}_n]$ for some $n$.
Then $y$ is a unit in $B_{L,E}$.
Since $\lambda_t(x-y)<\lambda_t(y)$, $x=y+(x-y)$ is also a unit in $B^{[r,r]}_{L,E}$.
\end{proof}

\begin{theorem} \label{T:FF strong Null}
The rings $A^r_{L,E}$ and $B^{[s,r]}_{L,E}$ have the strong Nullstellensatz property.
Consequently, by Theorem~\ref{T:Nullstellensatz property}, any commutative Banach $A$-algebra topologically of finite type over $A^r_{L,E}$ and $B^{[s,r]}_{L,E}$ also has the strong Nullstellensatz property.
\end{theorem}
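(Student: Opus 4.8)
The final sentence of the theorem is immediate from Theorem~\ref{T:Nullstellensatz property} (or rather Corollary~\ref{C:Nullstellensatz property topologically of finite type}), so the plan is to establish the strong Nullstellensatz property for $A^r_{L,E}$ and $B^{[s,r]}_{L,E}$ themselves. Unwinding the definition, and using that a rational localization is an open immersion of adic spectra and hence preserves completed residue fields at points, everything reduces to the claim $(\star)$: \emph{for every rational localization $C$ of $A^r_{L,E}$ or of $B^{[s,r]}_{L,E}$, and every maximal ideal $\frakm$ of $C$, the quotient $C/\frakm$ is a complete nonarchimedean field equal to the completed residue field of the corresponding point of $\mathcal{Y}_L$, and that point corresponds to an untilt of a finite extension of $L$.} Indeed, $(\star)$ supplies a multiplicative norm on $C/\frakm$, giving the ``strong'' clause directly; and given a rational localization $A^r_{L,E} \to B$ (resp.\ $B^{[s,r]}_{L,E} \to B$) and a maximal ideal $\frakm$ of $B$, the point cut out by $\frakm$ is a classical point of $\mathcal{Y}_L$ by $(\star)$, so its image in $\Spa(A^r_{L,E})$ (resp.\ $\Spa(B^{[s,r]}_{L,E})$) is again classical with the same completed residue field; hence $\frakm \cap A^r_{L,E}$ (resp.\ $\frakm \cap B^{[s,r]}_{L,E}$) is a maximal ideal and the induced map on quotients is an isomorphism.

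To prove $(\star)$, the idea is to combine the noetherian structure theory of \cite{noeth} with a descent to the algebraically closed base $\hat{\overline{L}}$ furnished by Lemma~\ref{L:finite preimage}. By \cite{noeth} the rings $A^r_{L,E}$ and $B^{[s,r]}_{L,E}$ are strongly noetherian and one-dimensional, so any rational localization $C$ is noetherian and a maximal ideal of $C$ is closed; thus $C/\frakm$ is at least a Banach field, and the real content of $(\star)$ is to rule out the pathology of Remark~\ref{R:Banach field}. When $L$ is algebraically closed $(\star)$ is easy: then every maximal ideal of such a ring is generated by a primitive element of degree one, with residue field an untilt of $L$, and every closed point of $\mathcal{Y}_L$ is classical. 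In general, fix $C$ and $\frakm$ as in $(\star)$ and let $\widetilde C$ be the coordinate ring of the base change of $\Spa(C)$ along $\Spa \hat{\overline{L}} \to \Spa L$; this is a rational localization of the $\hat{\overline{L}}$-analogue, and it is the completion of $\varinjlim_{L'} C_{L'}$ over finite separable extensions $L'/L$, each $C_{L'}$ finite \'etale over $C$ because $\mathcal{Y}_{L'} \to \mathcal{Y}_L$ is. Using that these spaces have pure dimension one, $\frakm$ is not a minimal prime of $C$; since $\widetilde C$ is flat over $C$, going-down forces $\frakm \widetilde C$ to lie in no minimal prime of the one-dimensional noetherian ring $\widetilde C$, so $\widetilde C/\frakm \widetilde C$ is Artinian. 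Hence the preimage of the point of $\frakm$ in $|\mathcal{Y}_{\hat{\overline{L}}}|$ is a finite set of closed points, which by the algebraically closed case are classical, so Lemma~\ref{L:finite preimage} shows that this point corresponds to an untilt $L_1^\sharp$ of a finite extension $L_1$ of $L$. Finally, pro-finite-\'etale (in particular topologically faithfully flat) descent of Banach structures along $C \to \widetilde C$ identifies $C/\frakm$ with the complete nonarchimedean field $L_1^\sharp$, completing $(\star)$.

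The main obstacle is $(\star)$, and within it the elimination of the Banach-field pathology of Remark~\ref{R:Banach field}. Two points will require care: first, genuinely matching the ring-theoretic maximal ideal $\frakm$ of $C$ with a closed --- equivalently classical --- point of $\mathcal{Y}_L$, which is what makes $\widetilde C/\frakm \widetilde C$ Artinian and brings Lemma~\ref{L:finite preimage} into play; and second, the descent of the Banach topology along $C \to \widetilde C$, used to see that the quotient topology on $C/\frakm$ is really the valuation topology of $L_1^\sharp$ rather than something strictly finer, which relies on $C \to \widetilde C$ being a completed filtered colimit of finite \'etale maps. An alternative organization would cite strong noetherianity from \cite{noeth} and then verify the Jacobson--Tate condition of \cite[Definition~3.1]{Lou17} to invoke Remark~\ref{R:Jacobson-Tate}, but the essential geometric input --- the structure of closed points of $\mathcal{Y}_L$ --- would be the same.
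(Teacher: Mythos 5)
Your overall plan — reduce to a claim $(\star)$ that maximal ideals of rational localizations cut out classical points corresponding to untilts of finite extensions of $L$, prove the algebraically closed case, and descend via Lemma~\ref{L:finite preimage} — is in the same spirit as the paper's proof, but the mechanism you use to get there has several real gaps.

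First, the degenerate case $B^{[s,s]}_{L,E}$ with $s \notin \Sigma_L$ must be split off at the start: there the ring is already a nonarchimedean field, $\frakm = (0)$ is a minimal prime, and your dimension/flatness argument does not get off the ground. The paper disposes of this case separately and then uses that in the remaining cases classical points are dense (which is what guarantees $\frakm \neq 0$, a fact you use but do not justify).

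Second, and more seriously, you invoke flatness of $C \to \widetilde C$ and then ``pro-finite-\'etale (in particular topologically faithfully flat) descent of Banach structures'' to match the quotient topology on $C/\frakm$ with the valuation topology of $L_1^\sharp$. Neither is free. A \emph{completed} filtered colimit of finite \'etale maps need not be flat as a ring map, and the assertion that the Banach topology on $C/\frakm$ descends to the expected valuation topology is precisely the kind of statement that Remark~\ref{R:Banach field} warns can fail — ruling out that pathology is the whole content of the theorem, so asserting a descent that would immediately imply it is circular as written. The paper sidesteps both issues: it chooses a nonzero $x \in \frakm$, observes that $x$ stays nonzero after base change because $\mathcal{Y}_{L'} \to \mathcal{Y}_L$ is a \emph{v-cover} (much weaker than flatness, and all that is needed), and then uses \cite[Prop.~II.1.9, Prop.~II.1.11]{FS24} to show the zero locus $V(x) \subseteq \mathcal{Y}_L$ is a \emph{finite set of classical points} whose preimage upstairs is finite and classical, after which Lemma~\ref{L:finite preimage} gives the untilt-of-finite-extension conclusion. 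Once $V(x)$ is known to be a finite set of classical points, $B/(x)$ is Artinian with the expected residue fields and the topological identification of $B/\frakm$ is immediate — no descent of Banach structures is required.

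Third, the paper does not stop at $\hat{\overline{L}}$ but passes further to an algebraically closed $L'$ with $\Sigma_{L'} = (0, +\infty)$; this is to ensure the base-changed ring stays in the range where the rational-localization description of Remark~\ref{R:rational localization} and the structure results cited from \cite{FS24} apply uniformly. Your argument base changes only to $\hat{\overline{L}}$, where $\Sigma_{\hat{\overline{L}}}$ may fall short of $(0,+\infty)$, so the clean ``every maximal ideal is generated by a degree-one primitive element'' claim is not automatic in the form you state it.

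In short: the target statement $(\star)$ and the appeal to Lemma~\ref{L:finite preimage} are right, but replacing the paper's concrete zero-locus argument by a flatness-plus-topological-descent argument introduces exactly the difficulties the theorem is meant to overcome, and you would need to supply proofs of those descent claims (which essentially reprove the theorem) before the proposal is complete.
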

\begin{proof}
In the case of $B^{[s,r]}_{L,E}$ with $r = s \notin \Sigma_L$,
it is a nonarchimedean field by Lemma \ref{L:dim0}, so the strong Nullstellensatz property holds trivially. We exclude this case hereafter; the remaining cases correspond to subspaces of $|\mathcal{Y}_L|$ in which the classical points are dense.

Choose a complete algebraically closed extension $L'$ of $\hat{\overline{L}}$ such that $\Sigma_{L'} = (0, +\infty)$. 
Let $A^r_{L,E} \to B$ or $B^{[s,r]}_{L,E} \to B$
be a rational localization, let $B'$ be the base extension of $B$ from $L$ to $L'$,
and let $\frakm \subset B$ be a maximal ideal; then $\frakm \neq 0$ because the classical points of $\calM(B)$ are dense.
Choose any nonzero $x \in \frakm$,
let $S \subseteq \mathcal{Y}_L$ be the zero locus of $x$, 
and let $S' \subseteq \mathcal{Y}_{L'}$ be the preimage of $S$.
Since $\mathcal{Y}_{L'} \to \mathcal{Y}_L$ is a v-covering (as in the proof of Lemma~\ref{L:finite preimage}), $x$ remains nonzero in $B'$;
by the proof of \cite[Proposition~II.1.11]{FS24}, $S'$ is a finite set of classical points.
By \cite[Proposition~II.1.9]{FS24}, the preimage of $S$ in $\mathcal{Y}_{\hat{\overline{L}}}$
is a finite set of classical points; by Lemma~\ref{L:finite preimage},
$S$ is a finite set of points corresponding to untilts of finite extensions of $L$.
Therefore, the zero locus $T\subset \mathcal{Y}_L$ of $\frakm$ is also a finite set of points corresponding to untilts of finite extensions of $L$.
Since $T$ is connected, $T$ consists of a single point corresponding to an untilt of a finite extension of $L$.
This implies the claim at once by Lemma \ref{L:classical point}.
\end{proof}

\begin{remark}
Since the rings $A^r_{L,E}$ and $B^I_{L,E}$ are strongly noetherian \cite[Theorem~3.2, Theorem~4.10]{noeth}, Remark~\ref{R:Jacobson-Tate} and Theorem~\ref{T:FF strong Null} together imply that they are Jacobson-Tate rings, so the results of \cite{Lou17} apply.
\end{remark}

\begin{remark} \label{R:error}
Theorem~\ref{T:FF strong Null} can be used to correct an error in \cite{wear}: the given proof of \cite[Theorem~8.3]{wear} is incorrect, based on a similarly incorrect proof in \cite[Lemma~3.8]{kedlaya-liu-families}. As subsequent results in \cite{wear} (specifically \cite[Corollary~8.10, Proposition 9.10]{wear} depend only on \cite[Theorem~8.3]{wear} for the rings $A$ covered by Theorem~\ref{T:FF strong Null}, the latter may be used as a substitute.
\end{remark}

\begin{remark} \label{R:errata noeth}
We take the opportunity to record some errata for \cite{noeth}.
\begin{itemize}
\item
Hypothesis 2.1: ``contains $\mathbb{F}_q$'' should be ``equals $\mathbb{F}_q$''.
\item
Lemma 5.3(b): the displayed sequence should be
\[0\to L\to L' \to L'\otimes_L L' \to  L'\otimes_L L'\otimes_L L'.
\]
\item
Lemma 5.4: in the third paragraph of the proof, the equations $\lambda_1(v_l) \leq \epsilon^{-1} \lambda_1(q_l)$ and $\iota_1(v_l)-\iota_2(v_l) = q_l$ should read $\lambda_1(v_l) \leq \epsilon^{-1} \lambda_1(q_l-1)$ and $\lambda_1(\iota_1(v_l)-\iota_2(v_l) - q_l+1) \leq \epsilon \lambda_1(q_l-1)$. 
\item
Corollary 8.6: the deduction that the norm on $C/\mathfrak{m}$ is equivalent to $\beta$
requires knowing that $C/\mathfrak{m}$ is uniform, which has not yet been established.
In lieu of giving a direct correction, we observe that the claim follows from Theorem~\ref{T:FF strong Null} of this paper.
\end{itemize}
\end{remark}

\section{A final remark}

\begin{remark}
It would be of some interest to decide whether other natural classes of Banach Tate rings (or Huber Tate rings) admit the weak or strong Nullstellensatz property. One key case is when $A$ is a perfectoid ring over $\QQ_p$.
In this case, a theorem of Bhatt \cite[Theorem 1.12(2)]{bhatt-scholze} (see also \cite[Corollary~19.4.6]{prismatic})
implies that for every closed ideal $I$ of $A$, the uniform completion of $A/I$ is itself a quotient of $A$ by some possibly larger ideal.
When $I = \frakm$ is a maximal ideal, this implies that $A/\frakm$ is uniform, hence a perfectoid ring which is a field, and hence a perfectoid field by \cite[Theorem~4.2]{banach-field}. In particular, the topology on $A/\frakm$ is defined by some multiplicative norm.

The previous discussion implies that the weak and strong Nullstellensatz properties for perfectoid rings over $\QQ_p$ are equivalent, but it is not clear from this whether the weak property holds. It may however be possible to establish it for some more restricted class of perfectoid rings, such as those arising from He's criterion \cite{he}.
\end{remark}

\subsection*{Acknowledgement}
The first author was supported by NSF (grant DMS-2401536) and UC San Diego (Warschawski Professorship). The second author was supported by JSPS KAKENHI Grant Number JP23KJ0693. 

\subsection*{Data Availability Statement}
The authors declare that the manuscript has no associated data.

\subsection*{Conflict of Interest}
The authors declare that they have no conflict of interest.

\end{document}